\DeclareMathOperator{\diag}{\mathrm{diag}}
\newtheorem{definition}{Definition}
\newtheorem{Theorem}{Theorem}
\newtheorem{Lemma}{Lemma}
\newtheorem{Corollary}{Corollary}
\newtheorem{Assumption}{Assumption}
\newcommand{\m}[1]{\mathbf{#1}}
\newcommand{\mc}[1]{\mathcal{#1}}
\newcommand{\mb}[1]{\mathbb{#1}}
\newcommand{\bs}[1]{\boldsymbol{#1}}
\newcommand\numberthis{\addtocounter{equation}{1}\tag{\theequation}}
	\tikzstyle{frame} = [draw, -latex]
	\tikzstyle{line} = [draw]
	\tikzstyle{line2} = [draw, dashdotted]
	\tikzstyle{line3} = [draw, dashed]
	\tikzstyle{line3UD} = [draw, dashed]
	\tikzstyle{place} = [circle, draw=black, fill=white, thick, inner sep=2pt, minimum size=1mm]
	\tikzstyle{place2} = [circle, draw=black, fill=black, thick, inner sep=2pt, minimum size=1mm]
	\tikzstyle{placeRed} = [circle, draw=red, fill=red, thick, inner sep=2pt, minimum size=1mm]
	\tikzstyle{vertex} = [circle, draw=black, fill=black, thick, inner sep=2pt, minimum size=1mm]
\tikzset{
    right angle quadrant/.code={
        \pgfmathsetmacro\quadranta{{1,1,-1,-1}[#1-1]}     
        \pgfmathsetmacro\quadrantb{{1,-1,-1,1}[#1-1]}},
    right angle quadrant=1, 
    right angle length/.code={\def\rightanglelength{#1}},   
    right angle length=1.4ex, 
    right angle symbol/.style n args={3}{
        insert path={
            let \p0 = ($(#1)!(#3)!(#2)$) in     
                let \p1 = ($(\p0)!\quadranta*\rightanglelength!(#3)$), 
                \p2 = ($(\p0)!\quadrantb*\rightanglelength!(#2)$) in 
                let \p3 = ($(\p1)+(\p2)-(\p0)$) in  
            (\p1) -- (\p3) -- (\p2)
        }
    }
}
\begin{document}
%
\title{Free-Will Arbitrary Time Consensus Protocols with Diffusive Coupling}
%
%

\author{Quoc~Van~Tran,~\IEEEmembership{Student~Member,~IEEE,}
        Minh~Hoang~Trinh,~\IEEEmembership{Member,~IEEE,} Nam~Hoai~Nguyen, 
        and~Hyo-Sung~Ahn,~\IEEEmembership{Senior~Member,~IEEE}
\thanks{Q. V. Tran and H.-S. Ahn are with the School of Mechanical Engineering,
        Gwangju Institute of Science and Technology, Gwangju 61005, Republic of Korea.
        E-mails: {\tt\small $\{$tranvanquoc, hyosung$\}$@gist.ac.kr}}
\thanks{M. H. Trinh and N. H. Nguyen are with Department of Automatic Control, School of Electrical Engineering, Hanoi University of Science and Technology, Hanoi 11615, Viet Nam. E-mails: {\tt\small $\{$minh.trinhhoang, nam.nguyenhoai$\}$@hust.edu.vn}}
}

%
%

\markboth{Manuscript}%
{Shell \MakeLowercase{\textit{et al.}}: Bare Demo of IEEEtran.cls for IEEE Journals}
%



\maketitle

\begin{abstract}
In this technical note, we first clarify a technical issue in the convergence proof of a free-will arbitrary time (FwAT) consensus law proposed recently in Pal et al. IEEE Trans. Cybern. (2020)\cite{AKPal2020Tcyb}, making the results questionable. We then propose free-will arbitrary time consensus protocols for multi-agent systems with first- and second-order dynamics, respectively, and with (possibly switching) connected interaction graphs. Under the proposed consensus laws, we show that an average consensus is achieved in a free-will arbitrary prespecified time. Further, the proposed consensus laws are distributed in the sense that information is only communicated locally between neighboring agents; unlike the average consensus in \cite{AKPal2020Tcyb} that uses a deformed Laplacian.
\end{abstract}


%
\IEEEpeerreviewmaketitle

\section{Introduction}
Many problems involving multiagent systems (MASs), including orientation localization \cite{Quoc2018tcns,Younghun2021CTA} and coordination control \cite{SKhoo2009,YWang2019Tcyber}, require the agents' states to reach a consensus within a finite time. Thus, finite-time control and estimation in MASs have attracted tremendous research attention in recent years \cite{LWang2010tac,XShi2019tcyber, Quoc2018tcns,Bhat2000,HWang2019Tcyb}. 
However, an upper bound, namely $t_f$, of the convergence time in finite-time (FT) consensus in general depends on the initial conditions and other design parameters \cite{LWang2010tac,XShi2019tcyber, Quoc2018tcns,Bhat2000,HWang2019Tcyb}, which means that $t_f$ cannot be chosen freely.
Fixed-time (FxT) consensus schemes have been proposed in\cite{Polyakov2012tac,HWang2019,Mishra2020tcyber,Younghun2021CTA}. But, the bound of the settling time in fixed-time control is still dependent on the design parameters and hence cannot be assigned arbitrarily.
Consensus laws with prespecified convergence time using an auxiliary time-varying gain are proposed in \cite{YWang2019Tcyber}. An extension of \cite{YWang2019Tcyber} to prespecified time bearing-only formation control is given in \cite{ZLi2020Tcyber}.
Recently, free-will arbitrary time (FwAT) consensus protocols, built upon the results in \cite{AKPal2020auto}, are presented in \cite{AKPal2020Tcyb,Minh2021CTA}. In FwAT consensus, the settling time is bounded by a preset finite time $t_f$, which does not depend on the initial condition nor any system parameter. The settling time bound $t_f$ is explicitly available in the designed consensus laws and can be pre-specified arbitrarily\cite{AKPal2020Tcyb,Minh2021CTA}. Furthermore, the design and convergence analysis of FwAT consensus laws \cite{AKPal2020Tcyb,Minh2021CTA} are simpler than those in \cite{YWang2019Tcyber,ZLi2020Tcyber}.
However, existing works in prespecified time consensus \cite{AKPal2020Tcyb,Minh2021CTA,YWang2019Tcyber} have been proposed for only first-order integrator dynamics.

In this technical note, we first clarify a technical issue in the convergence proof of a FwAT consensus protocol proposed recently in \cite{AKPal2020Tcyb}, leaving the FwAT consensus result questionable. Our objective is then to investigate FwAT consensus schemes for systems of single- and double-integrator modeled agents, respectively. 

The specific contributions of this note are as follows. First, a FwAT consensus law for systems of single-integrator modeled agents is proposed to overcome the technical issue in \cite{AKPal2020Tcyb}.
Second, we devise a FwAT consensus scheme for multi-agent systems with a more realistic dynamics of double-integrator, which can approximately model multicopter drones, ground vehicles, etc. In particular, we propose a FwAT tracking control scheme to reduce the second order system to the first order counterpart. We show that the agents' states are bounded during the transient time of the tracking error system. Third, all the proposed FwAT consensus protocols are smooth and distributed in the sense that information is only communicated locally between neighboring agents; unlike the average consensus in \cite{AKPal2020Tcyb} that uses a deformed Laplacian. Fourth, the bound of the convergence time of the proposed consensus schemes is explicitly available and can be chosen arbitrarily regardless of the initial condition or any other parameter.
Finally, an application to FwAT formation control of mobile agents is presented and simulation results are also provided.

The remainder of this note is organized as follows. Preliminaries are given in Section \ref{sec:prel}. Sections \ref{sec:single-integrator} and \ref{sec:double_integ} propose FwAT protocols for single- and double-integrator modeled agents, respectively. An application to FwAT formation control of mobile agents is presented in Section \ref{sec:application}. Section \ref{sec:conclusion} concludes this note.
\section{Preliminaries}\label{sec:prel}
\subsubsection*{Notation} The set of nonnegative real number is $\mb{R}_+$. Let $\mb{R}^n$ and $\mb{R}^{n\times  m}$ be the $n$-dimensional Euclidean space and the $n\times m$ real matrix set, respectively. The vector of all ones is $\m{1}_n$ and the $n\times n$ identity matrix is $\m{I}_n$. 
For any $\m{x}=[x_1,\ldots,x_n]^\top\in \mb{R}^n$, we define $\mathrm{e}^{\m{x}}=[\mathrm{e}^{x_1},\ldots,\mathrm{e}^{x_n}]^\top$ and $\mathrm{ln}(\m{x})=[\mathrm{ln}({x}_1),\ldots,\mathrm{ln}({x}_n)]^\top$.
\subsection{Graph theory}
Let $\mc{G}=(\mc{V},\mc{E})$ be an undirected graph containing a node set $\mc{V}=\{1,\ldots,n\}$, and an edge set $\mc{E}\subset \mc{V}\times \mc{V}$ with the cardinality $|\mc{E}|=m$. If $(i,j)\in \mc{E}$ then agents $i$ and $j$ are neighbors. The set of neighbors of agent $i$ is denoted as $\mc{N}_i=\{j\in \mc{V}:(i,j)\in \mc{E}\}$. 
The Laplacian matrix $\mc{L}(\mc{G})=[l_{ij}]\in \mb{R}^{n\times n}$ associated with the graph $\mc{G}$ is defined as $l_{ij}=-1$ for $(i,j)\in \mc{E},~i\neq j$, $l_{ii}=-\sum_{j\in \mc{N}_i}l_{ij},~\forall i\in \mc{V}$, and $l_{ij}=0$ otherwise. 

For an undirected and connected graph $\mc{G}$, the Laplacian $\mc{L}(\mc{G})$ is symmetric, positive semidefinite with eigenvalues being $\lambda_1=0<\lambda_2\leq\ldots \leq\lambda_n$. In addition, the eigenvector corresponding to the zero eigenvalue of $\mc{L}$ is $\m{1}_n$ \cite{Saber2004tac}.

\subsection{Fixed-time stability theory} 
Consider the following nonlinear dynamical system
\begin{equation}\label{eq:nonlinear_syst}
\dot{\m{x}}=\m{f}(t,\m{x},\boldsymbol\alpha),~\m{x}(t_0)=\m{x}_0,
\end{equation}
where $\m{x}\in \mb{R}^n$ denotes the system state, $\boldsymbol\alpha\in \mb{R}^l$ contains \textit{adjustable} parameters of \eqref{eq:nonlinear_syst}, and $\m{f}:\mb{R}_+\times \mb{R}^n\rightarrow\mb{R}^n$ is a vector of nonlinear functions. Let $\m{x}=\m{0}$ be an equilibrium point of \eqref{eq:nonlinear_syst} and $\m{x}(t,\m{x}_0)$ the solution of \eqref{eq:nonlinear_syst} starting from an initial state $\m{x}_0\in \mb{R}^n$. We now have some definitions. 
\begin{definition} The origin of \eqref{eq:nonlinear_syst} is said to be
\begin{enumerate}[1)]
\item \cite{Bhat2000} Finite-time (FT) stable if it is asymptotic stable and for any $\m{x}_0\in \mb{R}^n$ there exists $0\leq T(\m{x}_0,\boldsymbol\alpha) <\infty$, called the settling time function, such that $\m{x}(t,\m{x}_0)=\m{0}$ for all $t\geq t_0 + T(\m{x}_0,\boldsymbol\alpha)$.
\item \cite{Polyakov2012tac} Fixed-time (FxT) stable if it is finite-time stable and there exists $T_{max}(\boldsymbol\alpha)< \infty$ independent of $\m{x}_0$ such that $T(\m{x}_0,\boldsymbol\alpha) \leq T_{max}(\boldsymbol\alpha)$.
\item \cite{AKPal2020Tcyb} Free-will arbitrary time (FwAT) stable if it is fixed-time stable and there exists $0<T_a<\infty$, which does not depend on $\m{x}_0$ nor $\boldsymbol\alpha$ and can be arbitrarily
prespecified, such that $T(\m{x}_0,\boldsymbol\alpha) \leq T_{a}$.
\end{enumerate}
\end{definition}
The following lemmas are useful to study free-will arbitrary stability of the origin of \eqref{eq:nonlinear_syst}.
\begin{Lemma}\cite[Thm. 1]{AKPal2020auto}\label{lm:free-will_convergence}
Consider the nonlinear system \eqref{eq:nonlinear_syst} and let $\mc{D}\subseteq \mb{R}^n$ be a set containing the origin. Let $\beta_1(\m{x})$ and $\beta_2(\m{x})$ be two continuous positive definite functions on $\mc{D}$. Assume that there exists a real-valued continuously differential function $V(t,\m{x}):[t_0,t_f)\times \mc{D}\rightarrow \mb{R}_+$ and a constant $\eta>1$ such that
\begin{enumerate}[(i)]
\item $\beta_1(\m{x})\leq V(t,\m{x})\leq \beta_2(\m{x}),\forall t\in [t_0,t_f)$
\item $ V(t,\m{0})=0,\forall t\in [t_0,t_f)$
\item $\dot{V}(t,\m{x})\leq -\dfrac{\eta}{t_f-t}\big(1-\mathrm{e}^{-V(t,\m{x})}\big),\forall\m{x} \in \mc{D},\forall t\in [t_0,t_f)$
\end{enumerate}
 then the origin is FwAT stable and $T_a=t_f-t_0$ with $t_f$ being an arbitrary prespecified time instant.
\end{Lemma}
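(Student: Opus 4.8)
The plan is to integrate the differential inequality (iii) directly and show that $V(t,\m{x}(t))$ is forced to zero as $t \to t_f$, which (together with (i)) gives that the state reaches the origin by time $t_f$ regardless of $\m{x}_0$ or $\boldsymbol\alpha$. First I would introduce the comparison scalar ODE $\dot{y} = -\tfrac{\eta}{t_f-t}(1-\mathrm{e}^{-y})$ with $y(t_0) = V(t_0,\m{x}_0) =: V_0 \ge 0$, and invoke the standard comparison lemma so that $0 \le V(t,\m{x}(t)) \le y(t)$ for all $t$ in the maximal interval of existence inside $[t_0,t_f)$. The scalar ODE is separable: writing $\tfrac{\mathrm{d}y}{1-\mathrm{e}^{-y}} = -\eta\,\tfrac{\mathrm{d}t}{t_f-t}$, note $\tfrac{1}{1-\mathrm{e}^{-y}} = \tfrac{\mathrm{e}^{y}}{\mathrm{e}^{y}-1}$, whose antiderivative is $\ln(\mathrm{e}^{y}-1)$. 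Hence $\ln\!\big(\mathrm{e}^{y(t)}-1\big) - \ln\!\big(\mathrm{e}^{V_0}-1\big) = \eta\,\ln\!\big(\tfrac{t_f-t}{t_f-t_0}\big)$, i.e.
\begin{equation}\label{eq:comparison_sol}
\mathrm{e}^{y(t)} - 1 = \big(\mathrm{e}^{V_0}-1\big)\left(\frac{t_f-t}{t_f-t_0}\right)^{\eta}.
\end{equation}

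Next I would read off the behavior of \eqref{eq:comparison_sol}. Since $\eta > 1 > 0$, the factor $\big(\tfrac{t_f-t}{t_f-t_0}\big)^{\eta}$ decreases monotonically from $1$ at $t=t_0$ to $0$ as $t \uparrow t_f$; therefore $\mathrm{e}^{y(t)} - 1$ decreases to $0$, so $y(t) \downarrow 0$ as $t \to t_f^-$, and in particular $y(t)$ stays bounded (by $V_0$) on $[t_0,t_f)$. By the comparison inequality and condition (i), $\beta_1(\m{x}(t)) \le V(t,\m{x}(t)) \le y(t)$ remains bounded, so the solution $\m{x}(t)$ does not escape and exists on all of $[t_0,t_f)$; moreover $\beta_1(\m{x}(t)) \le y(t) \to 0$, and since $\beta_1$ is continuous and positive definite this forces $\m{x}(t) \to \m{0}$ as $t \to t_f^-$. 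Continuity of $\m{x}(\cdot)$ then lets us set $\m{x}(t_f) = \m{0}$, and a short argument (using (iii) again, which keeps $\dot V \le 0$ so $V$ cannot leave the origin, together with (i)–(ii)) shows $\m{x}(t) = \m{0}$ for all $t \ge t_f$. Thus the settling time satisfies $T(\m{x}_0,\boldsymbol\alpha) \le t_f - t_0 =: T_a$, and since $t_f$ was an arbitrary prespecified instant independent of $\m{x}_0$ and $\boldsymbol\alpha$, the origin is FwAT stable.

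The main obstacle I anticipate is handling the boundary behavior at $t = t_f$ rigorously: condition (iii) only holds on the half-open interval $[t_0,t_f)$, where $\tfrac{\eta}{t_f-t}$ blows up, so one must argue carefully that the limit $\m{x}(t_f^-) = \m{0}$ exists and that the trajectory can be continued by zero past $t_f$ — this requires knowing the solution stays in $\mc{D}$ (forward invariance of a sublevel set, which \eqref{eq:comparison_sol} supplies since $V$ is nonincreasing along solutions by (iii)) and a limiting/uniqueness-type argument at $t_f$. A secondary technical point is justifying the comparison lemma when the right-hand side $-\tfrac{\eta}{t_f-t}(1-\mathrm{e}^{-y})$ is only piecewise-continuous in $t$ and not Lipschitz at $y=0$; this is standard (the right-hand side is still continuous and the maximal solution of the scalar ODE dominates $V$), but it should be stated explicitly. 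Everything else is the routine separation-of-variables computation already carried out in \eqref{eq:comparison_sol}.
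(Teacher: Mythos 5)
This lemma is quoted from \cite{AKPal2020auto} and the paper supplies no proof of its own, so there is nothing internal to compare against; your separation-of-variables/comparison argument is exactly the standard proof of that cited theorem and is correct. Two small remarks: the convergence $y(t)\to 0$ as $t\to t_f^-$ only needs $\eta>0$ (the hypothesis $\eta>1$ is what makes $\dot y=-\eta C(t_f-t)^{\eta-1}/(1+C(t_f-t)^{\eta})$ vanish at $t_f$, i.e.\ it controls boundedness/smoothness of the rate, not attractivity), and the persistence $\m{x}(t)=\m{0}$ for $t\geq t_f$ should be justified by the standing assumption that the origin is an equilibrium of \eqref{eq:nonlinear_syst} rather than by condition (iii), which is only hypothesized on $[t_0,t_f)$. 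Also, the right-hand side $1-\mathrm{e}^{-y}$ is smooth (hence Lipschitz) at $y=0$, so the uniqueness caveat you raise there is not actually an issue.
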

\begin{Lemma}\cite{AKPal2020Tcyb}\label{lm:decreasing_function}
For any $x,y\in \mb{R}$ satisfying $0<x\leq y$, there holds
\begin{equation}
-x(1-\mathrm{e}^{-x}) \geq -y(1-\mathrm{e}^{-y}).
\end{equation}
\end{Lemma}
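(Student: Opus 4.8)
The plan is to recognize that the claim is equivalent to the monotonicity of the scalar function
\[
g(x) := x\bigl(1-\mathrm{e}^{-x}\bigr), \qquad x>0,
\]
namely that $g(x)\le g(y)$ whenever $0<x\le y$; multiplying through by $-1$ then gives exactly $-x(1-\mathrm{e}^{-x})\ge -y(1-\mathrm{e}^{-y})$. So the whole task reduces to showing $g$ is non-decreasing on $(0,\infty)$.

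To prove this I would differentiate, obtaining
\[
g'(x) = \bigl(1-\mathrm{e}^{-x}\bigr) + x\,\mathrm{e}^{-x} = 1 - \mathrm{e}^{-x}(1-x),
\]
so that $g'(x)\ge 0$ is equivalent to $1-x\le \mathrm{e}^{x}$. This last inequality holds for every $x\ge 0$ because $\mathrm{e}^{x}\ge 1+x\ge 1-x$, the first bound being the standard convexity estimate for the exponential and the second being immediate from $x\ge 0$. Hence $g'(x)\ge 0$ on $(0,\infty)$, and integrating, $g(y)-g(x)=\int_x^y g'(t)\,\mathrm{d}t\ge 0$ for $0<x\le y$, which is the desired conclusion. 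Equivalently, one may skip the calculus altogether: on $(0,\infty)$ both factors $x\mapsto x$ and $x\mapsto 1-\mathrm{e}^{-x}$ are positive and non-decreasing (the latter because $\mathrm{e}^{-x}$ is decreasing), so their product $g$ is non-decreasing.

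The statement is elementary and I do not anticipate any genuine obstacle; the only minor point worth noting is that $g'(0)=0$, so the monotonicity is strict on $(0,\infty)$ but degenerates at the origin — this is harmless since the hypothesis is $0<x\le y$ and the asserted inequality is non-strict, so no boundary refinement is needed for the later use of this lemma in the convergence analysis.
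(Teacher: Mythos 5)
Your proof is correct. Note that the paper does not actually prove this lemma---it is stated with a citation to \cite{AKPal2020Tcyb} and used as a quoted fact---so there is no in-paper argument to compare against. Your reduction to the monotonicity of $g(x)=x\bigl(1-\mathrm{e}^{-x}\bigr)$ on $(0,\infty)$ is complete and sound: the derivative computation $g'(x)=1-\mathrm{e}^{-x}(1-x)\ge 0$ via $\mathrm{e}^{x}\ge 1+x\ge 1-x$ is valid, and your even simpler alternative (a product of two positive non-decreasing factors is non-decreasing) is arguably the cleanest route and fully suffices for the non-strict inequality claimed.
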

\begin{Lemma}\cite[Lem. 1]{AKPal2020Tcyb}\label{lm:vector_inequality} For any vector $\m{x}\in \mb{R}^n$, the following holds
\begin{equation}\label{eq:vector_inequality}
-||\m{x}||\big(1-\mathrm{e}^{-||\m{x}||}\big) \geq -\m{x}^\top\left(\m{1}_n-\mathrm{e}^{-\m{x}}\right).
\end{equation}
\end{Lemma}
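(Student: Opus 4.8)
The plan is to prove the equivalent form $\m{x}^\top(\m{1}_n - \mathrm{e}^{-\m{x}}) \geq \|\m{x}\|\,(1 - \mathrm{e}^{-\|\m{x}\|})$, from which \eqref{eq:vector_inequality} follows by negating both sides. The case $\m{x} = \m{0}$ is immediate (both sides vanish), so I would assume $r := \|\m{x}\| > 0$.

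First I would rewrite both sides using the scalar function $\mu(t) := (1 - \mathrm{e}^{-t})/t$ for $t \neq 0$, with $\mu(0) := 1$. Since $x_i(1 - \mathrm{e}^{-x_i}) = x_i^2\,\mu(x_i)$ for every $i$ (including $x_i=0$) and $\|\m{x}\|(1-\mathrm{e}^{-\|\m{x}\|}) = r^2\,\mu(r)$, the claim is reduced to $\sum_{i=1}^n x_i^2\,\mu(x_i) \geq \mu(r)\sum_{i=1}^n x_i^2$, using $\sum_i x_i^2 = r^2$. The crucial step is to show that $\mu$ is non-increasing on $\mathbb{R}$: a direct computation gives $\mu'(t) = \big((t+1)\mathrm{e}^{-t} - 1\big)/t^2$, whose numerator is non-positive precisely because $1 + t \leq \mathrm{e}^{t}$ for all real $t$; together with continuity of $\mu$ at $0$ this yields monotonicity on all of $\mathbb{R}$. (It is worth recording for the negative entries that $\mu(x_i) = (\mathrm{e}^{|x_i|}-1)/|x_i| \geq 1 > 0$ when $x_i < 0$, and that $\mu(r) > 0$.)

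With monotonicity in hand, $x_i \leq |x_i| \leq r$ gives $\mu(x_i) \geq \mu(r)$ for each $i$, hence $x_i^2\,\mu(x_i) \geq x_i^2\,\mu(r)$; summing over $i$ and using $\sum_i x_i^2 = r^2$ yields $\m{x}^\top(\m{1}_n - \mathrm{e}^{-\m{x}}) = \sum_i x_i^2\,\mu(x_i) \geq r^2\,\mu(r) = r\,(1 - \mathrm{e}^{-r})$, which is the desired bound; rearranging gives \eqref{eq:vector_inequality}.

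I expect the only real obstacle to be establishing that $\mu$ is non-increasing across the whole real line — essentially the elementary estimate $1+t \leq \mathrm{e}^{t}$ — and, relatedly, being careful that the argument covers entries $x_i$ of either sign. Everything else is the bookkeeping identity $x_i(1-\mathrm{e}^{-x_i}) = x_i^2\mu(x_i)$ followed by one summation. As a backup, if one prefers not to differentiate $\mu$, one can first reduce to non-negative entries via $x_i(1-\mathrm{e}^{-x_i}) \geq |x_i|(1-\mathrm{e}^{-|x_i|})$ — which holds since $\mathrm{e}^{s} + \mathrm{e}^{-s} \geq 2$ — and then invoke subadditivity of the concave map $H(s) = \sqrt{s}\,(1-\mathrm{e}^{-\sqrt s})$ on $[0,\infty)$ with $H(0)=0$, which gives $\sum_i H(x_i^2) \geq H\big(\sum_i x_i^2\big)$.
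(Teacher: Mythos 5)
Your proof is correct. Note that the paper itself does not prove this lemma at all---it is imported by citation as Lemma~1 of \cite{AKPal2020Tcyb}---so there is no in-paper argument to compare against; your write-up is a valid self-contained proof. The key reduction $x_i(1-\mathrm{e}^{-x_i}) = x_i^2\mu(x_i)$ with $\mu(t) = (1-\mathrm{e}^{-t})/t$, the monotonicity of $\mu$ via $1+t \leq \mathrm{e}^t$, and the weighted comparison $\mu(x_i) \geq \mu(\|\m{x}\|)$ (using $x_i \leq |x_i| \leq \|\m{x}\|$ and $x_i^2 \geq 0$) all check out, and the fallback route through $\mathrm{e}^{s}+\mathrm{e}^{-s}\geq 2$ and subadditivity of the concave map $H$ is also sound.
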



\section{Consensus With Free-Will Arbitrary Convergence Time}\label{sec:single-integrator}
In this section, we first identify a technical issue in the convergence analysis of the FwAT consensus protocol in \cite[Theorem 2]{AKPal2020Tcyb}, leaving the proof questionable. To remedy the technical issue, we then modify the consensus protocol and show that the multi-agent system achieves an average consensus in an arbitrary prespecified time.

Consider a system of $n$ agents with each agent $i$ maintaining a state scalar $x_i$. Let $\m{x}=[x_1,\ldots,x_n]^\top\in \mb{R}^n$ be the stacked vector of the states of the $n$ agents. We adopt the single-integrator model for the dynamics of the agents as follows
\begin{equation}\label{eq:single_integrator}
\dot{\m{x}}=\m{u},~\m{x}(t_0) =\m{x}_0, 
\end{equation}
where $\m{u}\in \mb{R}^n$ denotes the control input. We impose the following assumption on the system graph.
\begin{Assumption}\label{ass:undirected_Graph} The graph $\mc{G}$ of the system is undirected and connected.
\end{Assumption}
In \cite{AKPal2020Tcyb} a consensus protocol is proposed as
\begin{equation}\label{eq:Pal_consensus_law}
\m{u}=\begin{cases} -\frac{\eta}{t_f-t}\big[ \m{I}_n-\mathrm{e}^{-\diag(\mc{L\m{x}})}\big]\m{1}_n,& \text{if}~ t_0\leq t< t_f \\ \m{0},& \text{otherwise}
\end{cases}
\end{equation}
 where $\eta$ is a positive constant such that $\eta>1/\lambda_2(\mc{L})$.
\subsection{Comments on "Free-will arbitrary time consensus for multiagent systems" \cite{AKPal2020Tcyb}}
It is stated in \cite[Theorem 2]{AKPal2020Tcyb} that under Assumption \ref{ass:undirected_Graph} and consensus law \eqref{eq:Pal_consensus_law}, the agents achieve a consensus at an arbitrary chosen time $t_f$. 
The proof of \cite[Theorem 2]{AKPal2020Tcyb} relies on the following inequality 
\begin{equation}\label{eq:wrong_inequality}
\lambda_2||\m{x}||^2 \leq \m{x}^\top\mc{L}\m{x}.
\end{equation}
This inequality is however not correct since the Laplacian matrix $\mc{L}$ is only positive semidefinite. Indeed, by selecting $\m{x}=\m{1}_n$ and using the relation $\mc{L}\m{1}_n=\m{0}$, one has
\begin{equation*}
\lambda_2||\m{1}_n||^2=\lambda_2n > 0 = \m{1}_n^\top\mc{L}\m{1}_n,
\end{equation*}
which is a contradiction. If for any vector $\m{y}\in \mb{R}^n$ such that $\m{y}\perp \mathrm{null}(\mc{L})=\m{1}_n$ we can only have a corresponding relation $\lambda_2||\m{y}||^2 \leq \m{y}^\top\mc{L}\m{y}$. 

To achieve an average consensus, \cite{AKPal2020Tcyb} proposes an alternative consensus law in \cite[Eq. (24)]{AKPal2020Tcyb}. It is noted that the deformed Laplacian used in the average consensus law \cite[Eq. (24)]{AKPal2020Tcyb} is not for diffusive coupling. Moreover, the consensus law requires that all agents know the average of their initial states ${x}^*:=\m{1}_n^\top\m{x}(0)/n$. This requirement is restrictive since ${x}^*$ is not readily available to the agents and the initial state vector $\m{x}(0)$ might be initialized arbitrarily. The distributed nature of the average consensus scheme \cite[Eq. (24)]{AKPal2020Tcyb} is therefore questionable.

Motivated by the aforementioned observations, we propose below a (\textit{distributed}) average consensus scheme with free-will arbitrary prespecified settling time. 
\subsection{Proposed Average Consensus Laws}
We propose the following FwAT average consensus
\begin{equation}\label{eq:our_consensus_law}
\m{u}=\begin{cases} \frac{\eta}{t_f-t}\mc{L}\mathrm{e}^{-\mc{L\m{x}}},& \text{if}~ t_0\leq t< t_f \\ \m{0},& \text{otherwise},
\end{cases}
\end{equation}
for a positive constant $\eta > {1}/{\lambda_2^2}$.
It can be verified that the preceding consensus law is modified from \eqref{eq:Pal_consensus_law} by left-multiplying by $\mc{L}$ on the right hand side of \eqref{eq:Pal_consensus_law}. Indeed, we have that
\begin{align*}
&-\tfrac{\eta}{t_f-t}\mc{L}\big( \m{I}_n-\mathrm{e}^{-\diag(\mc{L\m{x}})}\big)\m{1}_n\\
&=-\tfrac{\eta}{t_f-t}\big( \mc{L}\m{1}_n-\mc{L}\mathrm{e}^{-\diag(\mc{L\m{x}})}\m{1}_n\big)=\tfrac{\eta}{t_f-t}\mc{L}\mathrm{e}^{-\mc{L\m{x}}},
\end{align*}
which is identical to \eqref{eq:our_consensus_law}. Furthermore, the control law for each agent $i$ in \eqref{eq:our_consensus_law} is explicitly given as
\begin{equation*}
u_i = \tfrac{\eta}{t_f-t}\sum_{j\in \mc{N}_i}\Big(\mathrm{e}^{\sum_{j\in \mc{N}_i}(x_j-x_i)}-\mathrm{e}^{\sum_{k\in \mc{N}_j}(x_k-x_j)}\Big)
\end{equation*}
Thus, each agent $i$ needs to communicate a sum of the relative states ${z}_i:=\sum_{j\in \mc{N}_i}(x_j-x_i)$ to its neighbors. In many coordination control scenarios related to multiagent systems, the agents sense relative states, such as relative positions and relative bearing vectors, to their neighbors. Each agent $i$ then simply broadcasts ${z}_i$ to its neighbors $j\in \mc{N}_i$ in order to carry out \eqref{eq:our_consensus_law}.  

Denote $\bar{x}:=\m{1}_n^\top\m{x}(0)/n$ as the average of the agents' initial states. Let $\delta_i:= \m{x}_i-\bar{x}$ and $\bs\delta:=[\delta_1,\ldots,\delta_n]^\top=\m{x}-\bar{x}\m{1}_n$. It then follows that $\dot{\m{x}}=\dot{\bs\delta}$, and $\mc{L}\bs\delta=\mc{L}(\m{x}-\bar{x}\m{1}_n)=\mc{L}\m{x}$ since $\mc{L}\m{1}_n=\m{0}$ due to Assumption 1. We study the convergence of the proposed consensus protocol in the following subsection.

\subsection{Convergence analysis}
We note that the average of the agent states $\m{1}_n^\top\m{x}(t)/n$ along the trajectory of \eqref{eq:our_consensus_law} is time-invariant.
\begin{Lemma}\label{lm:invariant_average} Assume that Assumption \ref{ass:undirected_Graph} holds. Under the consensus law \eqref{eq:our_consensus_law}, the average of the agent states $\m{1}_n^\top\m{x}(t)/n$ is time-invariant.
\end{Lemma}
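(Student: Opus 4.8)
The plan is to show directly that the time derivative of $\m{1}_n^\top\m{x}(t)/n$ vanishes along every trajectory of the closed loop. First I would work on the interval $[t_0,t_f)$, where the dynamics read $\dot{\m{x}} = \tfrac{\eta}{t_f-t}\mc{L}\,\mathrm{e}^{-\mc{L}\m{x}}$. Differentiating the average and substituting the control law gives
\begin{equation*}
\frac{\mathrm{d}}{\mathrm{d}t}\Big(\tfrac{1}{n}\m{1}_n^\top\m{x}(t)\Big) = \tfrac{1}{n}\m{1}_n^\top\dot{\m{x}} = \tfrac{\eta}{n(t_f-t)}\,\m{1}_n^\top\mc{L}\,\mathrm{e}^{-\mc{L}\m{x}}.
\end{equation*}
The key observation is that $\m{1}_n^\top\mc{L} = \m{0}^\top$: since the graph is undirected (Assumption \ref{ass:undirected_Graph}), $\mc{L}$ is symmetric, and $\mc{L}\m{1}_n = \m{0}$ by the definition of the Laplacian (each row sums to zero), so $\m{1}_n^\top\mc{L} = (\mc{L}^\top\m{1}_n)^\top = (\mc{L}\m{1}_n)^\top = \m{0}^\top$. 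Hence the right-hand side is identically zero on $[t_0,t_f)$.

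Second, for $t\geq t_f$ the control input is $\m{u}=\m{0}$, so $\dot{\m{x}}=\m{0}$ and the average is trivially constant there as well. Combining the two cases, $\m{1}_n^\top\m{x}(t)$ is continuous on $[t_0,\infty)$ with zero derivative at every $t\neq t_f$, hence constant; therefore $\m{1}_n^\top\m{x}(t)/n \equiv \m{1}_n^\top\m{x}(t_0)/n = \bar{x}$ for all $t\geq t_0$.

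There is essentially no real obstacle here; the only points requiring a little care are (i) observing that the computation is valid wherever the solution exists, with local existence on $[t_0,t_f)$ following from continuity of the vector field in $(t,\m{x})$, and (ii) handling the piecewise definition of $\m{u}$ across the switching instant $t_f$, which is dispatched by continuity of the state trajectory. The resulting identity $\m{x}(t) = \bar{x}\m{1}_n + \bs\delta(t)$ with $\m{1}_n^\top\bs\delta(t)\equiv 0$ is exactly the reduction to disagreement coordinates that the subsequent convergence theorem will exploit.
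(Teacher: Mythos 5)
Your argument is the same as the paper's: both rest on the single identity $\m{1}_n^\top\mc{L}=\m{0}^\top$, which forces $\m{1}_n^\top\dot{\m{x}}=0$ along trajectories of \eqref{eq:our_consensus_law}. Your version merely spells out the justification of that identity and the trivial case $t\geq t_f$ in more detail; the proof is correct.
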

\begin{proof}
Since $\m{1}_n^\top\mc{L}=\m{0}$ we have that $\m{1}_n^\top\dot{\m{x}}={0}$ along the trajectory of \eqref{eq:our_consensus_law}. It follows that the average of the agent states $\m{1}_n^\top\m{x}(t)/n$ is time-invariant.
\end{proof}

The dynamics of the error vector $\bs\delta$ is given as
\begin{equation}\label{eq:delta_dynamics}
\dot{\bs\delta}=\begin{cases} -\frac{\eta}{t_f-t}\mc{L}\big( \m{1}_n-\mathrm{e}^{-\mc{L}\bs\delta}\big),& \text{if}~ t_0\leq t< t_f \\ 0,& \text{otherwise}
\end{cases}
\end{equation}
We can now prove the following result.
\begin{Theorem}\label{thm:fixed_time_consensus_1}
Assume that Assumption \ref{ass:undirected_Graph} holds. Under the consensus law \eqref{eq:our_consensus_law} with $\eta >1/\lambda_2^2$, $\m{x}(t)$ converges to $\m{1}_n\bar{x}$ within the chosen settling time $T_a=t_f-t_0$.
\end{Theorem}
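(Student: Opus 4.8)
\emph{Proof idea.} The plan is to apply Lemma~\ref{lm:free-will_convergence} to the error system \eqref{eq:delta_dynamics} with the $\lambda_2$-weighted Lyapunov candidate
\[
V(\bs\delta)=\frac{1}{2}\,\lambda_2^2\,\abs{\bs\delta}^2 .
\]
By Lemma~\ref{lm:invariant_average} together with $\bs\delta=\m{x}-\bar{x}\m{1}_n$, the error obeys $\m{1}_n^\top\bs\delta(t)\equiv 0$, so $\bs\delta(t)$ never leaves the subspace $\mc{D}:=\{\bs\delta\in\mb{R}^n:\m{1}_n^\top\bs\delta=0\}$, which I would use as the domain in Lemma~\ref{lm:free-will_convergence}; on $\mc{D}$ the map $V$ is smooth, positive definite and vanishes only at the origin, so conditions (i)--(ii) of Lemma~\ref{lm:free-will_convergence} hold with $\beta_1=\beta_2=V$. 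Recall also that $\mc{L}\m{x}=\mc{L}\bs\delta$ because $\mc{L}\m{1}_n=\m{0}$.

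First I would differentiate $V$ along \eqref{eq:delta_dynamics}: using $\mc{L}^\top=\mc{L}$ and $\mc{L}\m{1}_n=\m{0}$, for $t\in[t_0,t_f)$,
\[
\dot V=-\frac{\eta\lambda_2^2}{t_f-t}\,(\mc{L}\bs\delta)^\top\big(\m{1}_n-\mathrm{e}^{-\mc{L}\bs\delta}\big).
\]
Applying Lemma~\ref{lm:vector_inequality} with $\m{x}=\mc{L}\bs\delta$ gives
\[
\dot V\le-\frac{\eta\lambda_2^2}{t_f-t}\,\abs{\mc{L}\bs\delta}\big(1-\mathrm{e}^{-\abs{\mc{L}\bs\delta}}\big).
\]
Since $\bs\delta\perp\m{1}_n$ and $\lambda_2$ is the smallest nonzero eigenvalue of $\mc{L}$, we have $\abs{\mc{L}\bs\delta}\ge\lambda_2\abs{\bs\delta}=\sqrt{2V}$, and because $s\mapsto s(1-\mathrm{e}^{-s})$ is nondecreasing on $[0,\infty)$ (Lemma~\ref{lm:decreasing_function}),
\[
\dot V\le-\frac{\eta\lambda_2^2}{t_f-t}\,\sqrt{2V}\big(1-\mathrm{e}^{-\sqrt{2V}}\big).
\]

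The decisive step is the scalar inequality $\sqrt{2V}\,\big(1-\mathrm{e}^{-\sqrt{2V}}\big)\ge 1-\mathrm{e}^{-V}$ for every $V\ge 0$, equivalently $s\,(1-\mathrm{e}^{-s})\ge 1-\mathrm{e}^{-s^2/2}$ for $s\ge 0$, which I would establish by cases: for $0\le s\le 1$ one has $1-\mathrm{e}^{-s}\ge s/2$, hence $s(1-\mathrm{e}^{-s})\ge s^2/2\ge 1-\mathrm{e}^{-s^2/2}$; for $1\le s\le 2$ one chains $1-\mathrm{e}^{-s^2/2}\le 1-\mathrm{e}^{-s}\le s(1-\mathrm{e}^{-s})$; and for $s\ge 2$, $s(1-\mathrm{e}^{-s})\ge 2(1-\mathrm{e}^{-2})>1\ge 1-\mathrm{e}^{-s^2/2}$. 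With this, $\dot V\le-\dfrac{\eta\lambda_2^2}{t_f-t}\big(1-\mathrm{e}^{-V}\big)$, and since the hypothesis $\eta>1/\lambda_2^2$ yields $\eta\lambda_2^2>1$, condition (iii) of Lemma~\ref{lm:free-will_convergence} holds with the constant $\eta\lambda_2^2$. (Also, $V$ being nonincreasing keeps $\bs\delta$ bounded, so the solution of \eqref{eq:delta_dynamics} exists on the whole of $[t_0,t_f)$.) Lemma~\ref{lm:free-will_convergence} then certifies that the origin of \eqref{eq:delta_dynamics} is FwAT stable with settling time $T_a=t_f-t_0$; equivalently, $\bs\delta(t)=\m{0}$, i.e. $\m{x}(t)=\m{1}_n\bar{x}$, for all $t\ge t_f$, and this is preserved because $\m{u}\equiv\m{0}$ on $[t_f,\infty)$.

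I expect two steps to be the crux. The first is the $\lambda_2^2$-weighting in $V$: the naive choice $\tfrac12\abs{\bs\delta}^2$ would turn the gain requirement into ``$\eta>1$'' instead of ``$\eta>1/\lambda_2^2$'', so the factor $\lambda_2^2$ must be built into the Lyapunov function from the start. The second is the scalar estimate $s(1-\mathrm{e}^{-s})\ge 1-\mathrm{e}^{-s^2/2}$, which has to reconcile the linear-near-zero growth of $1-\mathrm{e}^{-V}$ with the quadratic-near-zero growth of $\sqrt{2V}(1-\mathrm{e}^{-\sqrt{2V}})$ and is the only genuinely non-mechanical estimate in the argument.
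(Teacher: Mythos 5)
Your proof is correct, and it shares the paper's overall skeleton -- the same error variable $\bs\delta=\m{x}-\bar{x}\m{1}_n$, the invariance of the average, Lemma~\ref{lm:vector_inequality} applied to $\mc{L}\bs\delta$, the spectral bound $\|\mc{L}\bs\delta\|\ge\lambda_2\|\bs\delta\|$ on $\m{1}_n^\perp$, and Lemma~\ref{lm:decreasing_function} -- but it departs from the paper at the final step. The paper keeps $V=\bs\delta^\top\bs\delta$, arrives at $\dot V\le-\tfrac{2\eta\lambda_2}{t_f-t}\sqrt{V}\,(1-\mathrm{e}^{-\lambda_2\sqrt{V}})$, and then changes variables to $\xi=\lambda_2\sqrt{V}$ so that $\dot\xi\le-\tfrac{\eta\lambda_2^2}{t_f-t}(1-\mathrm{e}^{-\xi})$ drops out immediately and Lemma~\ref{lm:free-will_convergence} is applied to $\xi$; no extra inequality is needed, but the function handed to the lemma is $\lambda_2\|\bs\delta\|$, which is not differentiable at the origin (and $\dot\xi=\lambda_2\dot V/(2\sqrt V)$ is formally undefined when $V=0$). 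You instead weight the Lyapunov function by $\lambda_2^2/2$ and close the argument with the scalar estimate $s(1-\mathrm{e}^{-s})\ge 1-\mathrm{e}^{-s^2/2}$, whose three-case verification is correct (on $[0,1]$ the bound $1-\mathrm{e}^{-s}\ge s/2$ holds since $s\mapsto 1-\mathrm{e}^{-s}-s/2$ vanishes at $0$, increases up to $s=\ln 2$, and is still positive at $s=1$; the other two cases are immediate). This buys you a Lyapunov function that is genuinely $C^1$ everywhere, so Lemma~\ref{lm:free-will_convergence} applies without the implicit nonsmoothness at $\bs\delta=\m{0}$, at the price of one additional elementary inequality; both routes land on the same gain condition $\eta\lambda_2^2>1$. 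One minor caveat: your closing remark that the unweighted choice $\tfrac12\|\bs\delta\|^2$ would yield the requirement ``$\eta>1$'' is not quite right -- with that choice the exponent mismatch ($1-\mathrm{e}^{-\lambda_2^2 V}$ versus $1-\mathrm{e}^{-V}$) prevents a direct application of your scalar inequality when $\lambda_2<1$ -- but this aside does not affect the proof you actually give.
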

\begin{proof}
Consider the Lyapunov function 
\begin{equation}\label{eq:Lyap}
V=\bs\delta^\top\bs\delta,
\end{equation}
which is positive definite and continuously differentiable in $t_0\leq t< t_f$. The derivative of $V$ along a trajectory of \eqref{eq:delta_dynamics} is given as
\begin{align*}
\dot{V}&=2\bs\delta^\top\dot{\bs\delta}\\
&=-\tfrac{2\eta}{t_f-t}\bs\delta^\top\mc{L}\big( \m{1}_n-\mathrm{e}^{-\mc{L}\bs\delta}\big)\\
&=-\tfrac{2\eta}{t_f-t}(\mc{L}\bs\delta)^\top\big( \m{1}_n-\mathrm{e}^{-\mc{L}\bs\delta}\big)\\
&\leq  -\tfrac{2\eta}{t_f-t}||\mc{L}\bs\delta||\big( 1-\mathrm{e}^{-||\mc{L}\bs\delta||}\big), \numberthis \label{eq:dotV_1}
\end{align*}
where the third equality follows from the symmetry of Laplacian matrix $\mc{L}^\top=\mc{L}$ due to the undirected nature of the graph, and in the last inequality we have used Lemma \ref{lm:vector_inequality}.

Since $\m{1}_n^\top\m{x}(t)$ is time-invariant (Lemma \ref{lm:invariant_average}) one has $\m{1}_n^\top\bs\delta = \m{1}_n^\top(\m{x}-\bar{x}\m{1}_n)= 0$ for all $t\geq 0$. In other words, $\bs\delta$ is orthogonal to the eigenvector $\m{1}_n$ corresponding to the zero eigenvalue of $\mc{L}$ for all time $t \geq 0$. Consequently, we have that
\begin{align*}
\lambda_2(\mc{L})||\bs\delta||^2 &\leq \delta^\top\mc{L}\bs\delta\\
&\leq ||\bs\delta||||\mc{L}\bs\delta||\\
\Leftrightarrow \lambda_2(\mc{L})\sqrt{V} &\leq ||\mc{L}\bs\delta||, \numberthis
\end{align*}
where the second inequality follows from Holder's inequality.
From the preceding inequality, Lemma \ref{lm:decreasing_function} and \eqref{eq:dotV_1}, we obtain
\begin{equation}
\dot{V} \leq -\tfrac{2\eta \lambda_2}{t_f-t}\sqrt{V}\big( 1-\mathrm{e}^{-\lambda_2\sqrt{V}}\big).
\end{equation}
Let $\xi:=\lambda_2\sqrt{V}$. Then, one obtains
\begin{equation}\label{eq:V_free_will_convergent}
\dot{\xi}=\lambda_2\tfrac{\dot{V}}{2\sqrt{V}}\leq-\tfrac{\eta \lambda_2^2}{t_f-t}( 1-\mathrm{e}^{-\xi}).
\end{equation}
If $\eta >1/\lambda_2^2$ it then follows from \eqref{eq:V_free_will_convergent} and Lemma \ref{lm:free-will_convergence} that $\xi$ converges to zero within a free-will arbitrary settling time $T_a=(t_f-t_0)$ and so does $V$. As a result, $\bs\delta =\m{0}$ or $\m{x}=\m{1}_n\bar{x}$ for all $t\geq t_f$.
\end{proof}

\subsection{FwAT consensus under switching graph topologies}
This subsection considers FwAT consensus of multiagent systems under switching graphs. Let us assume that the graph of the system is time-varying and is denoted by $\mc{G}_{\sigma(t)}=(\mc{V},\mc{E}_{\sigma(t)})$ with $\mc{E}_{\sigma(t)}\subset \mc{V}\times \mc{V}$ and $\sigma(t):\mb{R}^+\rightarrow\mc{P}=\{1,2,\ldots,\rho\}$ being a piecewise constant switching signal. It is assumed that there exists a sequence of time instants $\{t_k\},k\in \mb{Z}^+$ such that $\sigma(t)$ is a constant for $t_k\leq t< t_{k+1}$, $t_{k+1}-t_k>\tau_s >0,\forall t_k$. We assume the following uniform connectedness condition.
\begin{Assumption}[Uniform Connectedness]\label{ass:uniform_connected}
Each graph topology $\mc{G}_k,\forall k\in \mc{P}$ is undirected and connected.
\end{Assumption}
As a result, the Laplacian $\mc{L}_{\sigma(t)}$ associated with the graph $\mc{G}_{\sigma(t)}$ remains positive semidefinite with $\lambda_2(\mc{L}_{\sigma(t)})$ being strictly positive, for all $t\geq t_0$. Let $\bar{\lambda}_2:=\min\{\lambda_2(\mc{L}_\sigma)\}_{\sigma\in \mc{P}}$. Then, for any $\bs\delta\in \mb{R}^n$ such that $\delta\perp \m{1}_n$, the following holds
\begin{equation*}
\bar\lambda_2||\bs\delta||^2 \leq \bs\delta^\top\mc{L}_{\sigma(t)}\bs\delta,~\forall t\geq t_0.
\end{equation*}
Thus, we obtain the following corollary whose proof can be shown by following similar lines as in Proofs of Theorem \ref{thm:fixed_time_consensus_1}.
\begin{Corollary}
Consider the multi-agent system \eqref{eq:single_integrator} with switching graph topologies $\mc{G}_{\sigma(t)}$ satisfying Assumption \ref{ass:uniform_connected}. Under the consensus law \ref{eq:our_consensus_law} with $\eta >1/\bar{\lambda}_2^2$, $\m{x}(t)$ converges to $\m{1}_n\bar{x}$ in fixed time with the prespecified settling time $T_a=t_f-t_0$.
\end{Corollary}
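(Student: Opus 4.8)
The plan is to replay the proof of Theorem~\ref{thm:fixed_time_consensus_1} on each interval of fixed topology and then patch the pieces together using continuity of the state. First I would note that the invariance argument of Lemma~\ref{lm:invariant_average} carries over verbatim: since $\m{1}_n^\top\mc{L}_\sigma=\m{0}$ for every $\sigma\in\mc{P}$, the consensus law \eqref{eq:our_consensus_law} gives $\m{1}_n^\top\dot{\m{x}}=0$ at all times other than the switching instants, and $\m{x}$ is continuous there, so $\m{1}_n^\top\m{x}(t)/n\equiv\bar{x}$. Hence $\bs\delta(t)=\m{x}(t)-\bar{x}\m{1}_n$ is orthogonal to $\m{1}_n$ for all $t\ge t_0$, which is what lets us use the spectral bound $\bar{\lambda}_2\|\bs\delta\|^2\le\bs\delta^\top\mc{L}_{\sigma(t)}\bs\delta$ stated just before the corollary.

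Next I would take the same Lyapunov function $V=\bs\delta^\top\bs\delta$. On any interval $[t_k,t_{k+1})$ the signal $\sigma(t)\equiv\sigma_k$ is constant, so the computation in the proof of Theorem~\ref{thm:fixed_time_consensus_1} applies with $\mc{L}$ replaced by $\mc{L}_{\sigma_k}$, yielding via Lemma~\ref{lm:vector_inequality} the bound $\dot{V}\le-\tfrac{2\eta}{t_f-t}\|\mc{L}_{\sigma_k}\bs\delta\|\big(1-\mathrm{e}^{-\|\mc{L}_{\sigma_k}\bs\delta\|}\big)$. Because $\bs\delta\perp\m{1}_n$ and $\lambda_2(\mc{L}_{\sigma_k})\ge\bar{\lambda}_2$ by Assumption~\ref{ass:uniform_connected}, we have $\bar{\lambda}_2\sqrt{V}\le\lambda_2(\mc{L}_{\sigma_k})\sqrt{V}\le\|\mc{L}_{\sigma_k}\bs\delta\|$, and combining this with Lemma~\ref{lm:decreasing_function} gives, uniformly in $k$,
\begin{equation*}
\dot{V}\le-\tfrac{2\eta\bar{\lambda}_2}{t_f-t}\sqrt{V}\big(1-\mathrm{e}^{-\bar{\lambda}_2\sqrt{V}}\big).
\end{equation*}
Setting $\xi:=\bar{\lambda}_2\sqrt{V}$, one obtains $\dot{\xi}\le-\tfrac{\eta\bar{\lambda}_2^2}{t_f-t}(1-\mathrm{e}^{-\xi})$ on every such interval, i.e., hypothesis (iii) of Lemma~\ref{lm:free-will_convergence} with the constant $\eta\bar{\lambda}_2^2>1$ guaranteed by the assumption $\eta>1/\bar{\lambda}_2^2$.

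Finally I would glue the intervals. The dwell-time condition $t_{k+1}-t_k>\tau_s>0$ ensures only finitely many switches in any compact subinterval of $[t_0,t_f)$, so $V$ (hence $\xi$) is continuous on $[t_0,t_f)$ and continuously differentiable on each $[t_k,t_{k+1})$ — in particular absolutely continuous, with the differential inequality above holding for almost every $t$. Since the proof of Lemma~\ref{lm:free-will_convergence} is a comparison argument that merely integrates the scalar inequality, it remains valid for such an absolutely continuous $\xi$; equivalently, one integrates the bound piecewise over each $[t_k,t_{k+1})$ and matches the pieces through continuity of $V$ at the $t_k$. Either way $\xi(t)\to0$ as $t\to t_f^-$, hence $V(t)\to0$ and $\bs\delta(t)\to\m{0}$, so $\m{x}(t)=\m{1}_n\bar{x}$ for all $t\ge t_f$, with $T_a=t_f-t_0$ independent of $\m{x}_0$ and of the switching signal $\sigma(\cdot)$.

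The step I expect to be the main obstacle is precisely this last gluing: Lemma~\ref{lm:free-will_convergence} is stated for a continuously differentiable $V$, whereas under switching $\dot{V}$ has jump discontinuities at the instants $t_k$, so $V$ is only piecewise $C^1$. The resolution — observing that only the absolute continuity of $V$ together with the almost-everywhere differential inequality is actually needed, or equivalently patching the piecewise estimates using continuity of the state — is routine, but it is the one genuinely new ingredient relative to Theorem~\ref{thm:fixed_time_consensus_1} and should be spelled out rather than left to the phrase ``following similar lines.''
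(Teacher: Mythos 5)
Your proposal is correct and follows essentially the same route the paper intends: the paper gives no separate proof, merely asserting that the corollary ``can be shown by following similar lines as in'' Theorem~\ref{thm:fixed_time_consensus_1} after noting the uniform bound $\bar{\lambda}_2\|\bs\delta\|^2\le\bs\delta^\top\mc{L}_{\sigma(t)}\bs\delta$, which is exactly the replacement you make. Your extra care about gluing the piecewise-$C^1$ Lyapunov estimate across switching instants is a genuine refinement of a detail the paper leaves implicit, but it does not constitute a different approach.
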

Since the consensus law is fixed time convergent and $t_f$ is independent of the initial state, we may allow the graph to be empty for some time interval $[t_1, t_2]\subset [t_0, t_f)$. That is sometimes, all nodes may be disconnected from the network for a short amount of time and then reconnected. The fixed-time convergence property allows the consensus to be still achieved at some time $t \leq t_f$.
\subsubsection*{Example 1} 
\begin{figure}[t]
\centering
\begin{subfigure}[b]{.5\textwidth}
\centering
\begin{tikzpicture}[scale=0.9]\footnotesize
\tikzstyle{neuron}=[place,circle,inner sep=0,minimum size=9pt]
\node[neuron] (4) at (1,0.) [] {4};
\node[neuron] (1) at (0,0) [] {1};
\node[neuron] (2) at (0,1.) [] {2};
\node[neuron] (3) at (1.,1.) [] {3};
\node[] (g1) at (.5,-0.2) [label=below:\small$\mathcal{G}_1$] {};
\draw [line width=1pt] (1)--(2) (1)--(4) (3)--(2);
\node[neuron] (42) at (1+2.5,0.) [] {4};
\node[neuron] (12) at (+2.5,0) [] {1};
\node[neuron] (22) at (+2.5,1.) [] {2};
\node[neuron] (32) at (1.+2.5,1.) [] {3};
\node[] (g2) at (.5+2.5,-0.2) [label=below:\small$\mathcal{G}_2$] {};
\draw [line width=1pt] (12)--(42) (32)--(22) (32)--(42);
\node[neuron] (43) at (1+5,0.) [] {4};
\node[neuron] (13) at (+5,0) [] {1};
\node[neuron] (23) at (+5,1.) [] {2};
\node[neuron] (33) at (1.+5,1.) [] {3};
\node[] (g3) at (.5+5,-0.2) [label=below:\small$\mathcal{G}_3$] {};
\draw [line width=1pt] (23)--(13) (13)--(43) (43)--(33);
$$
\end{tikzpicture}
\caption{Switching graphs $\mc{G}_{\sigma}$ }
\label{fig:switching_graphs}
\label{fig:sim_switching_graphs}
\end{subfigure}\vspace{3pt}
\begin{subfigure}{.5\textwidth}
\centering
\includegraphics[height=4cm]{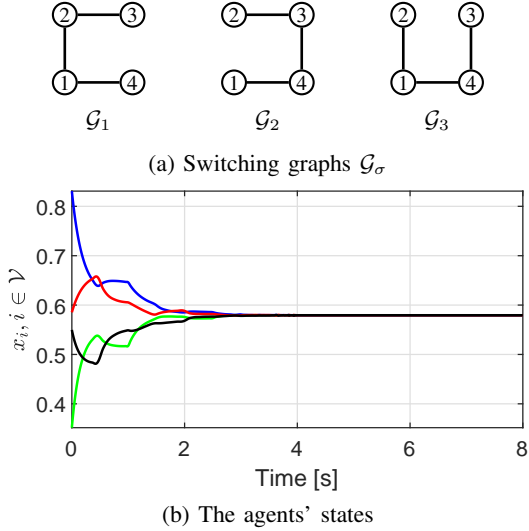}
\caption{The agents' states}
\label{fig:switching_agent_states}
\end{subfigure}
\caption{FwAT consensus under switching graphs.}
\label{fig:consensus_switching_graph}
\end{figure}
Consider a system of four agents whose communication graph switches every $0.5$s between the three graph topologies $\{\mc{G}_\sigma\}_{\sigma=1,2,3}$ given in Fig. \ref{fig:switching_graphs}.
 The agents' initial states are chosen in $[0,1]$. Simulation results for FwAT consensus of the agents under the FwAT consensus law \eqref{eq:our_consensus_law} with $t_f=4$s are given in Fig. \ref{fig:switching_agent_states}. We observe that the agents achieve the average consensus within the chosen time $t_f$.
\section{Double Integrator Modeled Agents}\label{sec:double_integ}
This section proposes a FwAT consensus protocol for systems of double-integrator modeled agents.

Consider the system of $n$ agents whose dynamics is modeled as the second order system
\begin{equation}\label{eq:2nd_order}
\dot{\m{x}}=\m{v},~\dot{\m{v}}=\m{u},
\end{equation}
where $\m{v}\in \mb{R}^n$ denotes the velocity vector and $\m{u}\in \mb{R}^n$ is the control vector.
We consider the change of variable
\begin{equation}\label{eq:change_variable_x}
\m{z}=\m{v}+\bs{\phi}_1,
\end{equation}
where $$-\bs{\phi}_1=\frac{\eta}{t_f-t}\mc{L}\mathrm{e}^{-\mc{L\m{x}}}$$ is the time-varying desired vector that we want the velocity vector $\m{v}\in \mb{R}^n$ to track. A possible approach is first steering $\m{v}(t)$ to track $-\bs{\phi}_1(t)$ in a free will arbitrary prespecified time $t_1>0$ ($t_1<t_f$), and then treating \eqref{eq:2nd_order} as the reduced single-integrator model $\dot{\m{x}}=-\bs{\phi}_1$ thereafter, provided that the system state is bounded in $t\in [t_0,t_1]$.
\subsection{Proposed consensus law}
To proceed, the time derivative of $\m{z}$ is given as
\begin{align*}
\dot{\m{z}}&=\dot{\m{v}}+\frac{\partial \bs{\phi}_1}{\partial \m{x}}\m{v}+\frac{\partial \bs{\phi}_1}{\partial t} \numberthis \label{eq:dotz}\\
&=\dot{\m{v}} +\frac{\eta}{t_f-t}\mc{L}\diag(\mathrm{e}^{-\mc{L\m{x}}})\mc{L}\m{v} -\frac{\eta}{(t_f-t)^2}\mc{L}\mathrm{e}^{-\mc{L\m{x}}}. \numberthis \label{eq:2nd_diffuse_coupling}
\end{align*}
We design the control input as
\begin{equation}\label{eq:control_2nd}
\m{u}=\begin{cases} -\frac{\partial \bs{\phi}_1}{\partial \m{x}}\m{v}-\frac{\partial \bs{\phi}_1}{\partial t}-\frac{\eta_2}{t_1-t}(\m{1}_n-\mathrm{e}^{-\m{z}}),&\text{if}~ t_0\leq t< t_1 \\ -\frac{\partial \bs{\phi}_1}{\partial \m{x}}\m{v}-\frac{\partial \bs{\phi}_1}{\partial t}, &\text{if}~t_1\leq t< t_f\\
\m{0},& \text{otherwise,}
\end{cases}
\end{equation}
where $0<t_1<t_f$ and $\eta_2>1$. From \eqref{eq:2nd_diffuse_coupling} and \eqref{eq:control_2nd}, each agent $i$ needs to communicate the sum of the relative states $\sum_{j\in \mc{N}_i}(x_j-x_i)$ and the sum of the relative velocities $\sum_{j\in \mc{N}_i}(v_j-v_i)$ to its neighbors. Thus, the proposed consensus law \eqref{eq:control_2nd} for second order system \eqref{eq:2nd_order} is distributed.  

We can now state the main result of this section.
\begin{Theorem}\label{thm:2nd_FwAT_consensus}
Consider the system of double-integrator modeled agents \eqref{eq:2nd_order} with connected communication graph $\mc{G}$. Under the consensus law \eqref{eq:control_2nd} with $\eta >1/\lambda_2^2$ and $\eta_2>1$, $\m{v}(t)\rightarrow\m{0}$ and $\m{x}(t)$ converges to a consensus in fixed time with the settling time $T_a=t_f-t_0$.
\end{Theorem}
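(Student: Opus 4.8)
The plan is to carry out the two-phase analysis suggested by the construction of the control law \eqref{eq:control_2nd}. In the first phase $t\in[t_0,t_1)$, I would track the auxiliary variable $\m{z}=\m{v}+\bs\phi_1$. Substituting the first branch of \eqref{eq:control_2nd} into the expression \eqref{eq:dotz} for $\dot{\m{z}}$ cancels the two feedforward terms $\tfrac{\partial\bs\phi_1}{\partial\m{x}}\m{v}$ and $\tfrac{\partial\bs\phi_1}{\partial t}$ exactly, leaving $\dot{\m{z}}=-\tfrac{\eta_2}{t_1-t}(\m{1}_n-\mathrm{e}^{-\m{z}})$. Then I would take $W=\m{z}^\top\m{z}$ as a Lyapunov candidate; differentiating and applying Lemma \ref{lm:vector_inequality} gives $\dot{W}\le -\tfrac{2\eta_2}{t_1-t}\|\m{z}\|\big(1-\mathrm{e}^{-\|\m{z}\|}\big)$, and setting $\zeta:=\sqrt{W}=\|\m{z}\|$ yields $\dot\zeta\le -\tfrac{\eta_2}{t_1-t}(1-\mathrm{e}^{-\zeta})$. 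Since $\eta_2>1$, Lemma \ref{lm:free-will_convergence} (with $t_f$ there replaced by $t_1$) shows $\m{z}(t)\equiv\m{0}$ for all $t\ge t_1$, i.e. $\m{v}(t)=-\bs\phi_1(t)$ from $t_1$ onward.

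For the second phase $t\in[t_1,t_f)$, once $\m{z}\equiv\m{0}$ the dynamics reduce exactly to $\dot{\m{x}}=\m{v}=-\bs\phi_1=\tfrac{\eta}{t_f-t}\mc{L}\mathrm{e}^{-\mc{L\m{x}}}$, which is precisely the first-order consensus law \eqref{eq:our_consensus_law}. (I should check that $\m{z}\equiv\m0$ is consistent with the second branch of \eqref{eq:control_2nd}: with $\m{u}=-\tfrac{\partial\bs\phi_1}{\partial\m{x}}\m{v}-\tfrac{\partial\bs\phi_1}{\partial t}$, the identity \eqref{eq:dotz} gives $\dot{\m{z}}=\m{0}$, so $\m{z}$ stays at $\m{0}$.) Hence Theorem \ref{thm:fixed_time_consensus_1} applies verbatim on $[t_1,t_f)$: $\m{x}(t)\to\m{1}_n\bar x$ and, since $\m{v}=-\bs\phi_1=\tfrac{\eta}{t_f-t}\mc{L}\mathrm{e}^{-\mc{L\m{x}}}$ with $\mc{L\m{x}}\to\m{0}$, also $\m{v}(t)\to\m{0}$, all within the prespecified time $t_f-t_0$. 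One small point to handle here: as $t\uparrow t_f$ the factor $\tfrac{\eta}{t_f-t}$ blows up, so I would argue (as in the single-integrator analysis, where $V$ itself reaches $0$ strictly before $t_f$) that $\mc{L\m{x}}$ decays fast enough that $\bs\phi_1\to\m{0}$, and then invoke the ``otherwise'' branch $\m{u}=\m{0}$ to keep $\m{v}=\m{0}$, $\m{x}$ constant for $t\ge t_f$.

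The main obstacle is the boundedness of the state on the transient interval $[t_0,t_1)$, which the theorem statement explicitly flags (``the system state is bounded during the transient time''). During this phase $\m{v}=\m{z}-\bs\phi_1$ is not yet at its target, and $\bs\phi_1$ contains the singular factor $\tfrac{\eta}{t_f-t}$; I need to show $\m{x}(t)$ and $\m{v}(t)$ do not escape to infinity before $t_1$. I would proceed as follows: from phase one, $\zeta(t)=\|\m{z}(t)\|$ is monotonically nonincreasing (its derivative is $\le 0$), so $\|\m{z}(t)\|\le\|\m{z}(t_0)\|$ is bounded on $[t_0,t_1)$. Then $\|\m{v}\|\le\|\m{z}\|+\|\bs\phi_1\|$, and since $t_1<t_f$ the factor $\tfrac{\eta}{t_f-t}$ stays bounded by $\tfrac{\eta}{t_f-t_1}$ on this interval; combined with $\|\mc{L}\mathrm{e}^{-\mc{L\m{x}}}\|$, which is controlled once $\m{x}$ is controlled, this closes a Gr\"onwall-type estimate: $\tfrac{d}{dt}\|\m{x}\|\le\|\m{v}\|\le c_1+c_2\|\mc{L}\mathrm{e}^{-\mc{L\m{x}}}\|$, and a short argument bounding $\|\mc{L}\mathrm{e}^{-\mc{L\m{x}}}\|$ in terms of $\|\m{x}\|$ (using that $\mathrm{e}^{-s}$ grows at most exponentially in $|s|$) on the \emph{finite} interval $[t_0,t_1)$ yields a finite a priori bound. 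I expect the bookkeeping of these constants — and making the exponential-in-$\m{x}$ term rigorous rather than hand-wavy — to be the part that needs the most care; everything else is a direct application of Lemmas \ref{lm:free-will_convergence}–\ref{lm:vector_inequality} and Theorem \ref{thm:fixed_time_consensus_1}.
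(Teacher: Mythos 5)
Your two-phase structure is exactly the paper's: the same Lyapunov function $\m{z}^\top\m{z}$, the same use of Lemma \ref{lm:vector_inequality} and Lemma \ref{lm:free-will_convergence} (with $t_1$ in place of $t_f$) to get $\m{z}\equiv\m{0}$ for $t\ge t_1$, and the same reduction to Theorem \ref{thm:fixed_time_consensus_1} on $[t_1,t_f)$. Those parts are fine.

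The genuine gap is in your boundedness argument on $[t_0,t_1)$. You propose a Gr\"onwall estimate of the form $\tfrac{d}{dt}\|\m{x}\|\le c_1+c_2\|\mc{L}\mathrm{e}^{-\mc{L}\m{x}}\|$ and then bound $\|\mc{L}\mathrm{e}^{-\mc{L}\m{x}}\|$ exponentially in $\|\m{x}\|$. But a differential inequality $\tfrac{d}{dt}y\le c_1+c_3\mathrm{e}^{c_4 y}$ does \emph{not} yield an a priori bound on a prescribed finite interval: the comparison ODE $\dot y=c_1+c_3\mathrm{e}^{c_4 y}$ blows up in finite time, and its escape time shrinks as the initial data grow, so nothing prevents escape before $t_1$. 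This is not bookkeeping that ``needs care''; the route fails. The paper avoids ever bounding $\|\mc{L}\mathrm{e}^{-\mc{L}\m{x}}\|$ by splitting $\m{x}=\m{P}\m{x}+(\m{I}_n-\m{P})\m{x}$ with $\m{P}=\m{I}_n-\m{1}_n\m{1}_n^\top/n$ (Lemmas \ref{lm:bounded_x_parallel} and \ref{lm:bounded_state}). Along $\m{1}_n$ the drift vanishes, so $\dot{\m{x}}^\perp=(\m{I}_n-\m{P})\m{z}$ and $\|\m{x}^\perp(t)-\m{x}^\perp(t_0)\|\le\int_{t_0}^{t}\|\m{z}(\tau)\|\,d\tau<\infty$. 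On $\mathrm{span}(\m{1}_n)^\perp$ the potentially large term is in fact \emph{dissipative}: with $V=\bs\delta^\top\bs\delta$ one gets, via Lemma \ref{lm:vector_inequality},
\begin{equation*}
\dot V\le-\tfrac{2\eta}{t_f-t}\|\mc{L}\bs\delta\|\bigl(1-\mathrm{e}^{-\|\mc{L}\bs\delta\|}\bigr)+2\|\bs\delta\|\,\|\m{z}(t)\|\le 2\sqrt{V}\,\|\m{z}(t)\|,
\end{equation*}
so $\tfrac{d}{dt}\sqrt{V}\le\|\m{z}(t)\|$ and the comparison lemma gives a finite bound from the integrability of $\|\m{z}\|$ (which your monotonicity observation already supplies). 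You need this sign-definiteness of the nominal drift on the consensus-error subspace, not a growth bound on the exponential term, to close the argument.
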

\begin{proof}
Let us consider the Lyapunov function
\begin{equation}\label{eq:V(z)}
V_2(\m{z})=\m{z}^\top\m{z}.
\end{equation}
The derivative of $V_2$ along the trajectory of \eqref{eq:control_2nd} is given as
\begin{align*}
\dot{V}_2&=2\m{z}^\top\dot{\m{z}}\\
&=-2\frac{\eta_2}{t_1-t}\m{z}^\top(\m{1}_n-\mathrm{e}^{-\m{z}})\\&\stackrel{\eqref{eq:vector_inequality}}{\leq} -2\frac{\eta_2}{t_1-t}||\m{z}||(1-\mathrm{e}^{-||\m{z}||})\\
&\leq -2\frac{\eta_2}{t_1-t}\sqrt{V_2}(1-\mathrm{e}^{-\sqrt{V_2}}).
\end{align*}
Let $\xi=\sqrt{V}_2=||\m{z}||$. Then, one has
\begin{equation}
\dot{\xi}=\frac{\dot{V}_2}{2\sqrt{V_2}}\leq-\frac{\eta}{t_f-t}( 1-\mathrm{e}^{-\xi}),
\end{equation}
which implies that $\m{z}= \m{0}$ is FwAT stable (Lemma \ref{lm:free-will_convergence}) or equivalently $\m{v}(t)=-\bs{\phi}_1$ for all time $t\geq t_1$. Further, the state vector $\m{x}(t)$ is bounded for all time $t\in [t_0,t_f]$ (see Lemma \ref{lm:bounded_state} below). Therefore, the system \eqref{eq:2nd_order} is reduced to the following single-integrator dynamics
\begin{equation}\label{eq:reduced_system}
\dot{\m{x}}=\frac{\eta}{t_f-t}\mc{L}\mathrm{e}^{-\mc{L\m{x}}},~\forall t\geq t_1,
\end{equation}
of which the average of the agents' states at $t=t_1$, namely $\bar{x}:=\m{1}_n^\top\m{x}(t_1)/n$, is FwAT stable if $\eta >1/\lambda_2^2$ (Theorem \ref{thm:fixed_time_consensus_1}). Since $\m{v}(t)=-\bs{\phi}_1$ for all $t\geq t_1$, and $\bs{\phi}_1\rightarrow \m{0}$ as $\m{x}\rightarrow \bar{x}$, we conclude that $\m{v}\rightarrow\m{0}$ at the same time as $\m{x}\rightarrow \bar{x}$.
\end{proof}
For the sake of completeness, we clarify below that \eqref{eq:control_2nd} is smooth at $t=t_1$, and investigate the system behavior during the time interval $[t_0,t_1]$ in the following subsection.
\begin{Lemma}
For any $\eta_2>0$, the FwAT consensus law \eqref{eq:control_2nd} is smooth at $t=t_1$.
\end{Lemma}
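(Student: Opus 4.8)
The plan is to reduce smoothness of \eqref{eq:control_2nd} at $t=t_1$ to a single object. The feedforward part $-\frac{\partial\bs\phi_1}{\partial\m{x}}\m{v}-\frac{\partial\bs\phi_1}{\partial t}$ is common to the two branches adjacent to $t_1$ and, since $\bs\phi_1$ is a composition of $t\mapsto\eta/(t_f-t)$, the constant matrix $\mc{L}$ and $\m{x}\mapsto\mathrm{e}^{-\mc{L}\m{x}}$, it is $C^\infty$ on $[t_0,t_f)$. Hence the only branch-dependent term is the feedback $\m{g}(t):=-\frac{\eta_2}{t_1-t}(\m{1}_n-\mathrm{e}^{-\m{z}})$, present on $[t_0,t_1)$ and absent on $[t_1,t_f)$. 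Consequently it suffices to show that, along the closed loop, $\m{g}$ extends across $t_1$ to a $C^\infty$ function whose left one-sided derivatives of every order vanish, so that it matches the identically-zero feedback of the upper branch.

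First I would use the closed-loop identity $\dot{\m{z}}=\m{g}(t)$ established in the proof of Theorem \ref{thm:2nd_FwAT_consensus}, together with FwAT convergence of the tracking error, which gives $\m{z}(t)=\m{0}$ for all $t\ge t_1$. Thus $\m{g}=\dot{\m{z}}$ and the upper-branch value of $\m{g}$ is $\m{0}$, so the problem becomes showing $\m{z}^{(k)}(t_1^-)=\m{0}$ for every $k\ge 0$. To produce these values I would clear the singular factor and differentiate the regular identity $(t_1-t)\dot{\m{z}}=-\eta_2(\m{1}_n-\mathrm{e}^{-\m{z}})$, whose two sides are smooth in their arguments. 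Applying Leibniz's rule $k$ times, using $\frac{d}{dt}(t_1-t)=-1$, and evaluating at $t_1^-$ with $\m{z}(t_1)=\m{0}$ and the inductive hypothesis that $\m{z}(t_1^-),\dots,\m{z}^{(k-1)}(t_1^-)$ already vanish, all cross terms in the Fa\`a di Bruno expansion of $\frac{d^k}{dt^k}\mathrm{e}^{-\m{z}}$ drop and one is left with the self-consistency relation $(\eta_2-k)\,\m{z}^{(k)}(t_1^-)=\m{0}$. For every order $k$ with $\eta_2\neq k$ this forces $\m{z}^{(k)}(t_1^-)=\m{0}$, the base case $\m{z}(t_1^-)=\m{0}$ being exactly Theorem \ref{thm:2nd_FwAT_consensus}; the finitely many exceptional integer orders $k=\eta_2$ I would dispose of directly from the decay estimate below.

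The hard part, and the one step that is not purely algebraic, is justifying that in passing to the limit the singular product $(t_1-t)\,\m{z}^{(k+1)}(t)$ may be dropped at $t_1^-$; this is legitimate precisely when $\m{z}^{(k+1)}(t)=o\big((t_1-t)^{-1}\big)$. I would supply the required order by a comparison argument in the spirit of Lemma \ref{lm:free-will_convergence}: the feedback dynamics decouples componentwise into scalar equations $\dot z_i=-\frac{\eta_2}{t_1-t}(1-\mathrm{e}^{-z_i})$, each of which is explicitly integrable and pins down how fast $z_i$ and its successive derivatives collapse as $t\uparrow t_1$. Feeding those bounds back into the recursion validates the dropped limit at each order and closes the induction, giving $\m{z}^{(k)}(t_1^-)=\m{0}$ for all $k$; therefore $\m{g}\in C^\infty$ across $t_1$ and \eqref{eq:control_2nd} is smooth at $t_1$ for any $\eta_2>0$. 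I expect the bookkeeping of these decay orders against the factor $(t_1-t)^{-1}$ to be the main technical obstacle of the whole argument.
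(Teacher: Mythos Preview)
Your reduction is the same as the paper's: both isolate the feedback term $\m{g}(t)=-\frac{\eta_2}{t_1-t}(\m{1}_n-\mathrm{e}^{-\m{z}})$, use the closed-loop identity $\dot{\m{z}}=\m{g}$, and exploit that the scalar equations $\dot z_i=-\frac{\eta_2}{t_1-t}(1-\mathrm{e}^{-z_i})$ are explicitly integrable. The difference is in how that explicit solution is used. The paper writes down the closed-form $z_i=\ln\bigl(1+c_i(t_1-t)^{\eta_2}\bigr)$ at the outset, substitutes it directly into $\m{g}$ and $\dot{\m{g}}$, and simply reads off that $\dot{\m{g}}(t_1^-)=\m{0}$; ``smooth'' in the paper is taken to mean $C^1$, i.e., matching of the one-sided first derivatives of $\m{u}$. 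Your route instead sets up a Leibniz/Fa\`a di Bruno induction on the regular identity $(t_1-t)\dot{\m{z}}=-\eta_2(\m{1}_n-\mathrm{e}^{-\m{z}})$ and only invokes the explicit solution as a source of decay bounds to justify dropping $(t_1-t)\m{z}^{(k+1)}$ at each step. Once you have the explicit solution in hand, the induction is superfluous: every derivative of $\m{g}$ can be computed from $z_i=\ln(1+c_i(t_1-t)^{\eta_2})$ directly, which is exactly the paper's shortcut.

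One further point worth flagging: you interpret ``smooth'' as $C^\infty$ and try to show $\m{z}^{(k)}(t_1^-)=\m{0}$ for every $k$, but the explicit formula shows this generally fails. From $g_i(t)\sim-\eta_2 c_i(t_1-t)^{\eta_2-1}$ one sees that $g_i^{(k)}(t_1^-)$ vanishes only for $k<\eta_2-1$; beyond that order the derivative is nonzero (or diverges for non-integer $\eta_2$). Your own recursion $(\eta_2-k)\,\m{z}^{(k)}(t_1^-)=\m{0}$ already signals the breakdown at $k=\eta_2$, and the ``decay estimates'' you plan to feed back will not rescue orders $k\ge\eta_2-1$. The paper sidesteps this by checking only the first derivative and by tacitly using $\eta_2>1$ (note the proof concludes ``for any $\eta_2>1$'' despite the lemma's hypothesis $\eta_2>0$).
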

\begin{proof}
By \eqref{eq:dotz} and \eqref{eq:control_2nd} we have $\dot{\m{z}}=\bs{\psi}(t):=-\frac{\eta_2}{t_1-t}(\m{1}_n-\mathrm{e}^{-\m{z}})$ and hence $\m{z}=\mathrm{ln}(\m{1}_n+\m{c}(t_1-t)^{\eta_2})$, where $\m{c}=[c_1,\ldots,c_n]^\top:=(\mathrm{e}^{\m{z}(t_0)}-\m{1}_n)/(t_1-t_0)^{\eta_2}$. Therefore, it can be verified that 
\begin{align*}
\bs{\dot{\psi}}(t)&=\frac{\partial \bs{\psi}}{\partial \m{z}}\dot{\m{z}}+\frac{\partial \bs{\psi}}{\partial t}\\
&=-\frac{\eta_2^2}{(t_1-t)^2}\diag(\m{1}_n-\mathrm{e}^{-\m{z}})(\m{1}_n-\mathrm{e}^{-\m{z}})\\
&=-{\eta_2^2}\Big[ \tfrac{c_1^2(t_1-t)^{2(\eta_2-1)}}{(1+c_1(t_1-t)^{\eta_2})^2},\ldots,\tfrac{c_n^2(t_1-t)^{2(\eta_2-1)}}{(1+c_n(t_1-t)^{\eta_2})^2}\Big]^\top.
\end{align*}
 It follows that for any $\eta_2>1$, $\bs{\dot{\psi}}(t=t_1)=\m{0}$. Consequently, \eqref{eq:control_2nd} is smooth at $t=t_1$ since $\lim_{t\rightarrow t_1-}(d/dt)\m{u}(t)=\lim_{t\rightarrow t_1+}(d/dt)\m{u}(t)$.
\end{proof}
\subsection{Boundedness of the system state}
Let us consider the following perturbed system 
\begin{equation}\label{eq:perturbed_syst}
\dot{\m{x}}=-\bs{\phi}_1(t,\m{x}) + \m{z}(t), t\in [t_0,t_1]
\end{equation}
with $\m{z}(t)$ being a perturbed signal. The perturbed input $\m{z}(t)$ converges to zero in a free-will arbitrary prespecified time $t_1$ (Theorem \ref{thm:2nd_FwAT_consensus}). Thus, $||\m{z}(t)||$ is also \textit{absolutely integrable} as the area under the curve $||\m{z}(t)||$ between $t\in [t_0,t_f]$ is finite, i.e., $\int_{t_0}^t||\m{z}(\tau)||d\tau<\infty,\forall t\geq 0$.

Let $\m{P}=(\m{I}_n-\m{1}_n\m{1}_n^\top/n)$ be the orthogonal projection onto $\mathrm{span}(\m{1}_n)^\perp$. Note that we can write $\m{x}=\m{P}\m{x}+(\m{I}_n-\m{P})\m{x}$. Thus, we bound these two components of $\m{x}$ in what follows.

By left-multiplying by $\m{P}$ on both sides of \eqref{eq:perturbed_syst} and letting ${\m{x}}^{\parallel}=\m{Px}$, we have 
\begin{align*}
\dot{\m{x}}^{\parallel}&=\frac{\eta}{t_f-t}\m{P}\mc{L}\mathrm{e}^{-\mc{L}\m{x}} + \m{P}\m{z}(t)\\
&= \frac{\eta}{t_f-t}\mc{L}\mathrm{e}^{-\mc{L\m{x}^{\parallel}}} + \m{P}\m{z}(t),\numberthis \label{eq:system_in_consensus_space}
\end{align*}
where we have used the relations $\m{P}\mc{L}=\mc{L}\m{P}=\mc{L}$. Note importantly that $\m{1}_n^\top\dot{\m{x}}^{\parallel}=0$ for all time $t$. Thus, we obtain the following lemma whose proof is given in Appendix \ref{app:bounded_x_parallel}.
\begin{Lemma}\label{lm:bounded_x_parallel}
The average point $\bar{\m{x}}^\parallel:=\big(\m{1}_n^\top\m{x}^\parallel(t_0)/n\big)\m{1}_n$ of the nominal system $\dot{\m{x}}^{\parallel}=-\bs{\phi}_1(t,\m{x}^\parallel)$ of \eqref{eq:system_in_consensus_space} is free-will arbitrary time stable, and the perturbed system \eqref{eq:system_in_consensus_space} is input to state stable w.r.t. the vanishing input $\m{P}\m{z}(t)$.
\end{Lemma}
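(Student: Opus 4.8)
The plan is to prove both assertions with the single quadratic Lyapunov candidate $V=\|\m{x}^{\parallel}\|^{2}$, reusing the computation from the proof of Theorem~\ref{thm:fixed_time_consensus_1}. The first step is a structural observation: since $\m{1}_n^{\top}\mc{L}=\m{0}$ and $\m{1}_n^{\top}\m{P}=\m{0}$, the right-hand side of \eqref{eq:system_in_consensus_space} is annihilated by $\m{1}_n^{\top}$, so $\m{1}_n^{\top}\m{x}^{\parallel}(t)=\m{1}_n^{\top}\m{x}^{\parallel}(t_0)=\m{1}_n^{\top}\m{P}\m{x}(t_0)=0$ for all $t$, for both the nominal and the perturbed flow. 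In particular $\bar{\m{x}}^{\parallel}=\m{0}$, and $\m{x}^{\parallel}(t)$ stays orthogonal to the zero-eigenvector $\m{1}_n$ of $\mc{L}$, so that $\lambda_2\|\m{x}^{\parallel}\|\le\|\mc{L}\m{x}^{\parallel}\|$ holds along every trajectory.

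For the nominal system $\dot{\m{x}}^{\parallel}=-\bs{\phi}_1(t,\m{x}^{\parallel})=\frac{\eta}{t_f-t}\mc{L}\mathrm{e}^{-\mc{L}\m{x}^{\parallel}}$, the estimate is then verbatim that of Theorem~\ref{thm:fixed_time_consensus_1} with $\bs\delta$ replaced by $\m{x}^{\parallel}$: symmetry of $\mc{L}$ and Lemma~\ref{lm:vector_inequality} give $\dot V\le-\frac{2\eta}{t_f-t}\|\mc{L}\m{x}^{\parallel}\|(1-\mathrm{e}^{-\|\mc{L}\m{x}^{\parallel}\|})$; the bound $\lambda_2\sqrt V\le\|\mc{L}\m{x}^{\parallel}\|$ together with Lemma~\ref{lm:decreasing_function} and the substitution $\xi=\lambda_2\sqrt V$ produce $\dot\xi\le-\frac{\eta\lambda_2^{2}}{t_f-t}(1-\mathrm{e}^{-\xi})$, and with $\eta>1/\lambda_2^{2}$ Lemma~\ref{lm:free-will_convergence} certifies FwAT stability of $\bar{\m{x}}^{\parallel}=\m{0}$ with $T_a=t_f-t_0$. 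For the perturbed system, differentiating the same $V$ along \eqref{eq:system_in_consensus_space} yields $\dot V=2(\m{x}^{\parallel})^{\top}(-\bs{\phi}_1)+2(\m{x}^{\parallel})^{\top}\m{P}\m{z}(t)$, where the first summand is exactly the nonpositive decay term just bounded and, since $\|\m{P}\|=1$, the second is at most $2\sqrt V\,\|\m{z}(t)\|$. Dropping the favourable decay term leaves $\frac{d}{dt}\sqrt V\le\|\m{z}(t)\|$ (with the usual $\sqrt{V+\varepsilon}$ regularisation at $\m{x}^{\parallel}=\m{0}$), hence $\|\m{x}^{\parallel}(t)\|\le\|\m{x}^{\parallel}(t_0)\|+\int_{t_0}^{t}\|\m{z}(\tau)\|\,d\tau$; because $\m{z}(t)$ is absolutely integrable (noted right after \eqref{eq:perturbed_syst}) this right-hand side is finite and uniformly bounded on $[t_0,t_1]$, and indeed on $[t_0,t_f]$. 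Keeping the decay term instead shows $\dot V<0$ whenever $\frac{\eta\lambda_2}{t_f-t_0}(1-\mathrm{e}^{-\lambda_2\|\m{x}^{\parallel}\|})>\|\m{z}(t)\|$, which delivers a class-$\mc{K}$ asymptotic gain; combined with $\m{z}(t)\to\m{0}$ from Theorem~\ref{thm:2nd_FwAT_consensus}, this gives input-to-state stability of \eqref{eq:system_in_consensus_space} with respect to the vanishing input $\m{P}\m{z}(t)$ and, in particular, $\m{x}^{\parallel}(t)\to\bar{\m{x}}^{\parallel}$.

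The delicate point is not the nominal FwAT claim --- that is a re-run of Theorem~\ref{thm:fixed_time_consensus_1} --- but making the ISS/boundedness argument clean in the presence of the time-varying gain $\eta/(t_f-t)$, which blows up as $t\uparrow t_f$; restricting to the compact subinterval $[t_0,t_1]\subset[t_0,t_f)$, all that the subsequent boundedness argument needs, keeps this gain finite and bounded away from zero, so the comparison estimates above go through. A secondary technicality is that the perturbation $\m{z}$ need not be small at $t_0$, only absolutely integrable over $[t_0,t_f]$; this is precisely why the uniform bound is obtained from the integral comparison inequality rather than from a pointwise ISS-Lyapunov inequality of the form $\dot V\le-\alpha_3(\|\m{x}^{\parallel}\|)$ for $\|\m{x}^{\parallel}\|\ge\rho(\|\m{z}\|)$. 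The remaining nonsmoothness of $\sqrt V$ at the origin is handled by the standard Dini-derivative comparison lemma.
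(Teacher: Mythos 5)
Your proof is correct and follows essentially the same route as the paper's: the same quadratic Lyapunov function (your $\m{x}^{\parallel}$ coincides with the paper's $\bs\delta=\m{x}^{\parallel}-\bar{\m{x}}^{\parallel}$, since $\m{1}_n^{\top}\m{P}=\m{0}$ forces $\bar{\m{x}}^{\parallel}=\m{0}$), the same chain of estimates via Lemma~\ref{lm:vector_inequality} and the bound $\lambda_2\sqrt{V}\le\|\mc{L}\m{x}^{\parallel}\|$, and the same comparison-lemma integration of the absolutely integrable perturbation to get boundedness and ISS. Your added remarks on the $\sqrt{V}$ regularisation at the origin and the explicit asymptotic-gain discussion are refinements the paper omits, but they do not change the argument.
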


Let $\m{x}^\perp:=(\m{I}_n-\m{P})\m{x}$. Left-multiplying by $(\m{I}_n-\m{P})$ on both sides of \eqref{eq:perturbed_syst} yields
\begin{equation}\label{eq:dot_x_perp}
\dot{\m{x}}^\perp=(\m{I}_n-\m{P})\m{z},
\end{equation}
where we have used the relation $(\m{I}_n-\m{P})\mc{L}=\m{1}_n\m{1}_n^\top\mc{L}/n=\m{0}$. Then, it follows from the preceding equation that 
\begin{align*}
\textstyle\int_{t_0}^t\dot{\m{x}}^\perp&=(\m{I}_n-\m{P})\textstyle\int_{t_0}^t\m{z}(\tau)d\tau\\
\m{x}^\perp(t)-\m{x}^\perp(t_0)&=(\m{I}_n-\m{P})\textstyle\int_{t_0}^t\m{z}(\tau)d\tau\\
\Leftrightarrow ||\m{x}^\perp(t)-\m{x}^\perp(t_0)||&\leq\textstyle\int_{t_0}^t||\m{z}(\tau)||d\tau<\infty.
\end{align*}
It follows that $\m{x}^\perp(t)$ is bounded for all time $t\in [t_0,t_1]$. Thus, the following result is obtained directly from the above analysis.
\begin{Lemma}\label{lm:bounded_state}
Consider the system of double-integrator modeled agents \eqref{eq:2nd_order} with connected communication graph $\mc{G}$. Under the consensus law \eqref{eq:control_2nd} with $\eta >1/\lambda_2^2$ and $\eta_2>1$, the state vector $\m{x}(t)$ is bounded for all time $t\in [t_0,t_1]$.
\end{Lemma}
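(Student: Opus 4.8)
The plan is to use the orthogonal decomposition $\m{x}=\m{x}^\parallel+\m{x}^\perp$ with $\m{x}^\parallel=\m{P}\m{x}$ and $\m{x}^\perp=(\m{I}_n-\m{P})\m{x}$, and to bound the two components separately over the finite horizon $[t_0,t_1]$. The preliminary fact that makes everything work, already established, is that the tracking error $\m{z}(t)$ vanishes in the free-will arbitrary time $t_1$ (Theorem \ref{thm:2nd_FwAT_consensus}); in fact the closed-form $\m{z}=\mathrm{ln}(\m{1}_n+\m{c}(t_1-t)^{\eta_2})$ from the smoothness lemma shows $\m{z}(\cdot)$ is a well-defined continuous signal on $[t_0,t_1]$, hence bounded there, and in particular $\int_{t_0}^{t_1}\abs{\m{z}(\tau)}\,d\tau<\infty$.

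First I would treat $\m{x}^\parallel$. Writing its dynamics in the form \eqref{eq:system_in_consensus_space}, the nominal flow $\dot{\m{x}}^\parallel=-\bs{\phi}_1(t,\m{x}^\parallel)$ has the average point $\bar{\m{x}}^\parallel$ as a free-will arbitrary time stable equilibrium and the perturbed system is input-to-state stable with respect to the vanishing input $\m{P}\m{z}(t)$ --- this is precisely Lemma \ref{lm:bounded_x_parallel}. Combining the ISS estimate with the boundedness of $\m{P}\m{z}$ on $[t_0,t_1]$ gives a uniform bound $\abs{\m{x}^\parallel(t)}\le c_\parallel$ for all $t\in[t_0,t_1]$ (and, in passing, no finite escape of $\m{x}^\parallel$ on this interval). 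Next I would treat $\m{x}^\perp$ directly: from \eqref{eq:dot_x_perp}, $\dot{\m{x}}^\perp=(\m{I}_n-\m{P})\m{z}$, so integrating from $t_0$ and using $\abs{\m{I}_n-\m{P}}=1$ yields
\begin{equation*}
\abs{\m{x}^\perp(t)-\m{x}^\perp(t_0)}\le\int_{t_0}^{t}\abs{\m{z}(\tau)}\,d\tau\le\int_{t_0}^{t_1}\abs{\m{z}(\tau)}\,d\tau<\infty,
\end{equation*}
so $\m{x}^\perp$ is bounded on $[t_0,t_1]$ as well. Finally, the triangle inequality $\abs{\m{x}(t)}\le\abs{\m{x}^\parallel(t)}+\abs{\m{x}^\perp(t)}$ delivers the claim.

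The main obstacle is not this assembly but the ISS property borrowed from Lemma \ref{lm:bounded_x_parallel}: one must exhibit a (time-varying) ISS-Lyapunov function for the consensus-space dynamics whose decay rate dominates the perturbation, despite the singular gain $\eta/(t_f-t)$ in $\bs{\phi}_1$. The reason this goes through is that the nominal convergence of $\m{x}^\parallel$ to $\bar{\m{x}}^\parallel$ is so fast (it terminates strictly before $t_f$) that the accumulated effect of the absolutely integrable disturbance $\m{P}\m{z}$ cannot drive a finite-time blow-up on $[t_0,t_1]$. One only needs to be careful to take all bounds over the closed interval $[t_0,t_1]$ with $t_1<t_f$, where $\eta/(t_f-t)$ and the partial derivatives of $\bs{\phi}_1$ appearing in \eqref{eq:2nd_diffuse_coupling} remain finite and continuous, so that the perturbation term in \eqref{eq:system_in_consensus_space} is itself continuous and bounded on that interval.
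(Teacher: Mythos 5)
Your argument is correct and follows essentially the same route as the paper: the same orthogonal decomposition $\m{x}=\m{P}\m{x}+(\m{I}_n-\m{P})\m{x}$, the same appeal to Lemma \ref{lm:bounded_x_parallel} (whose comparison-lemma estimate $\xi(t)\le\int_{t_0}^{t}\abs{\m{z}(\tau)}d\tau+\xi(0)$ bounds $\m{x}^\parallel$), and the same integration of \eqref{eq:dot_x_perp} against the absolutely integrable $\m{z}$ to bound $\m{x}^\perp$. Your extra observation that the autonomous closed form $\m{z}=\mathrm{ln}(\m{1}_n+\m{c}(t_1-t)^{\eta_2})$ guarantees $\m{z}$ is well defined independently of $\m{x}$ is a welcome clarification of a point the paper leaves implicit, but it does not change the argument.
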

\begin{figure*}[t]
\centering
\begin{subfigure}[b]{0.32\textwidth}
\centering
\includegraphics[width=.98\textwidth,keepaspectratio]{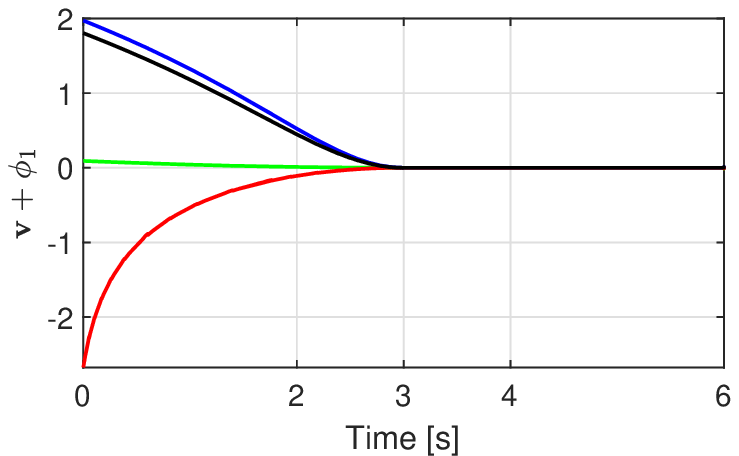}
\caption{Tracking error vector $\m{z}(t)$}
\end{subfigure}
\begin{subfigure}[b]{0.32\textwidth}
\centering
\includegraphics[width=.98\textwidth,keepaspectratio]{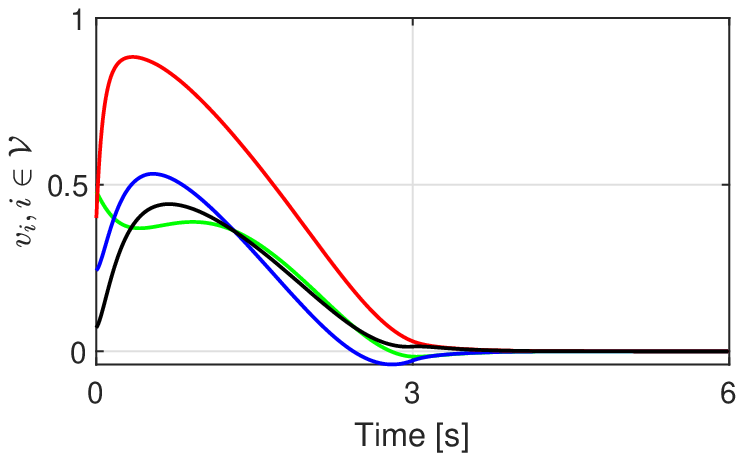}
\caption{$\m{v}(t)$}
\end{subfigure}
\begin{subfigure}[b]{0.32\textwidth}
\centering
\includegraphics[width=.98\textwidth,keepaspectratio]{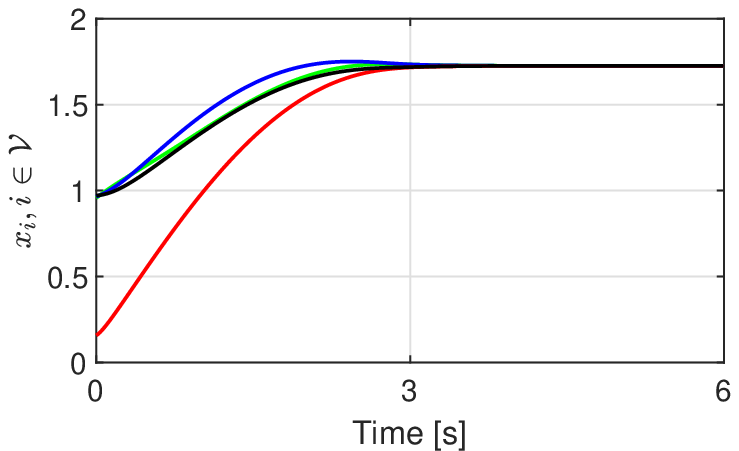}
\caption{States $\m{x}(t)$}
\end{subfigure}
\caption{Consensus of four double-integrator modeled agents under \eqref{eq:control_2nd} with $\eta=\eta_2=2,t_1=3$s and $t_f=6$s.}
\label{fig:double_integ_consensus}
\end{figure*}

\subsubsection*{Example 2}
An example of FwAT consensus of four agents under \eqref{eq:control_2nd} with $\eta=\eta_2=2,t_1=3$s and $t_f=6$s is given in Fig. \ref{fig:double_integ_consensus}. The communication graph of the agents is a ring graph.
In the simulation, the states of the agents $x_i(0),i=1,2,3,4,$ are initialized randomly in $[0,1]$ and $v_i(0),i=1,2,3,4,$ are chosen randomly in $[0,0.5]$. It is observed that the tracking vector $\m{z}=\m{v}+\bs{\phi}_1$ converges to zero within $t_1=3$s and the agent states achieve a consensus within the prespecified time $t_f=6$s.

\section{Application to FwAT Formation Control of Mobile Robots}\label{sec:application}
In this section, we present an application of the proposed free-will arbitrary time consensus scheme \eqref{eq:control_2nd} in formation control of mobile agents in the plane.
\subsection{Two-wheeled mobile robots}
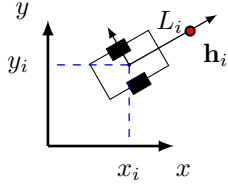
\begin{figure}[t]
\centering
\begin{tikzpicture}[scale = .9]
\node (py) at (0,2,0) [label=left:$y$]{};
\node (px) at (2.,0) [label=below:$x$]{};
\node[place,scale=0.1] (pi) at (1.2,1.2){};
\node[] (pi_y) at (0.8,1.9)[]{};
\draw[black,rotate around={30:(pi)}] (0.7,0.9) rectangle (1.7,1.5);
\filldraw[black,rotate around={30:(0.9+0.1,1.5-.1)}] (.8+0.1,1.4-.1) rectangle (1.1+0.1,1.6-.1);
\filldraw[black,rotate around={30:(0.9+0.4,1.5-.6)}] (.8+0.4,1.4-.6) rectangle (1.1+0.4,1.6-.6);
\draw[{line width=1pt}] (0,0) [frame] -- (px);
\draw[{line width=1pt}] (0,0) [frame] -- (py);
\draw[{line width=.5pt}] (pi) [frame] -- (2.5,1.94);
\node[place,scale=0.6,fill=red] (hi) at (2.1,1.7)[label=below right:$\m{h}_i$]{};
\node[] () at (1.8,1.4)[label=above:$L_i$]{};
\draw[{line width=.5pt}] (pi) [frame] -- (pi_y);
\draw[{line width=.5pt},blue, dashed] (pi) -- (0,1.2);
\draw[{line width=.5pt},blue, dashed] (pi) -- (1.2,0);
\node () at (0,1.2) [label=left:$y_i$]{};
\node () at (1.2,0) [label=below:$x_i$]{};
\end{tikzpicture}
\caption{A mobile robot in $\mb{R}^2$.}
\label{fig:unicycle}
\end{figure}
The motion of each mobile robot at the kinematic level is given as (see Fig. \ref{fig:unicycle})
\begin{equation}\label{eq:robot_motion}
\dot{\m{p}}_i=\begin{bmatrix}
\dot{x}_i \\ \dot{y}_i
\end{bmatrix}=\begin{bmatrix}
\cos(\theta_i)\\
\sin(\theta_i)
\end{bmatrix} v_i,~\dot{\theta}_i=\omega_i,
\end{equation}
where $\m{p}_i=[x_i,y_i]^\top$ denotes the coordinates of the robot $i$'s center location, $\theta_i$ is the robot $i$'s heading angle, and $v_i$ and $\omega_i$ are respectively the linear and angular velocity of the robot.

The hand position (or tool position) $\m{h}_i\in \mb{R}^2$ (see Fig. \ref{fig:unicycle}) is given as
\begin{equation}\label{eq:hand_position}
\m{h}_i=\begin{bmatrix}
h_{ix}\\ h_{iy}
\end{bmatrix}= \m{p}_i + \begin{bmatrix}
\cos(\theta_i)\\
\sin(\theta_i)
\end{bmatrix} L_i,
\end{equation}
where $L_i$ is the distance from the hand location to the robot $i$'s center point. The second derivative of $\m{h}_i$ can be obtained as
\begin{equation}
\ddot{\m{h}}_i=\begin{bmatrix}
\cos(\theta_i) &-L_i\sin(\theta_i)\\
\sin(\theta_i) &L_i\cos(\theta_i)
\end{bmatrix}\begin{bmatrix}
\dot{v}_i\\\dot{\omega}_i
\end{bmatrix} + \m{g}_i,
\end{equation}
where $\m{g}_i:=\begin{bmatrix}
-\sin(\theta_i)v_i\omega_i -L_i\cos(\theta_i)\omega_i^2\\
\cos(\theta_i)v_i\omega_i -L_i\sin(\theta_i)\omega_i^2
\end{bmatrix}$.

By using the following change of variable \cite{SKhoo2009} and feedback linearization 
\begin{equation}
\begin{bmatrix}
\dot{v}_i\\\dot{\omega}_i
\end{bmatrix}=\begin{bmatrix}
\cos(\theta_i) & \sin(\theta_i)\\
-\frac{1}{L_i}\sin(\theta_i)&\frac{1}{L_i}\cos(\theta_i)
\end{bmatrix}
(\m{u}_i-\m{g}_i),
\end{equation}
where $\m{u}_i\in \mb{R}^2$ is to be designed, we obtain
\begin{equation}\label{eq:robot_2nd_model}
\ddot{\m{h}}_i=\m{u}_i,
\end{equation}
which is in the form of \eqref{eq:2nd_order}. 
\subsection{Formation control protocol}
Consider a system of four mobile robots in $\mb{R}^2$ whose local interaction is described by a ring graph $\mc{G}=(\mc{V}=\{1,2,3,4\},\mc{E}=\{(1,2),(2,3),(3,4),(4,1)\})$. The system aims to form a square of side length of $1$m, which is specified by the set of desired displacements of the robots' relative hand positions $\{\m{h}^*_{12}=\m{h}^*_{42}=[1,0]^\top,\m{h}^*_{41}=\m{h}^*_{32}=[0,1]^\top\}$, where $\m{h}^*_{ij}=\m{h}^*_{j}-\m{h}^*_{i}$. The robots start at rest and from locations chosen in $[0,3]\times [0,3]$ (m). The initial heading angles of the agents are $\theta_1=0,\theta_2=\pi/2,\theta_3=\pi/3,$ and $\theta_4=\pi/6$ (rad).

Define $\m{u}=[\m{u}_1^\top,\m{u}_2^\top,\m{u}_3^\top,\m{u}_4^\top]^\top$, $\m{h}=[\m{h}_1^\top,\m{h}_2^\top,\m{h}_3^\top,\m{h}_4^\top]^\top$, 
\begin{equation*}
\bar{\mc{L}}=\mc{L}\otimes \m{I}_2,~\bs{\phi}_1=-\tfrac{\eta}{t_f-t}\bar{\mc{L}}\mathrm{e}^{-\bar{\mc{L}}(\m{h}-\m{h}^*)},
\end{equation*}
and $\m{z}=\dot{\m{h}}+\bs{\phi}_1$ with
\begin{equation}
\dot{\m{h}}_i=\begin{bmatrix}
\cos(\theta_i) &-\sin(\theta_i)L_i\\
\sin(\theta_i) &\cos(\theta_i)L_i
\end{bmatrix}\begin{bmatrix}
v_i\\\omega_i
\end{bmatrix}.
\end{equation}

Then, we design the control input as
\begin{equation}\label{eq:control_mobile_robot}
\m{u}=\begin{cases} -\frac{\partial \bs{\phi}_1}{\partial \m{h}}\dot{\m{h}}-\frac{\partial \bs{\phi}_1}{\partial t}-\frac{\eta_2}{t_1-t}(\m{1}_n-\mathrm{e}^{-\m{z}}),&\text{if}~ t_0\leq t< t_1 \\ -\frac{\partial \bs{\phi}_1}{\partial \m{h}}\dot{\m{h}}-\frac{\partial \bs{\phi}_1}{\partial t}, &\text{if}~t_1\leq t< t_f\\
\m{0},& \text{otherwise},
\end{cases}
\end{equation}
where the partial derivative terms are given as
\begin{align*}
&\frac{\partial \bs{\phi}_1}{\partial \m{h}}=\frac{\eta}{t_f-t}\bar{\mc{L}}\diag(\mathrm{e}^{-\bar{\mc{L}}\m{x}})\bar{\mc{L}},\\
&\frac{\partial \bs{\phi}_1}{\partial t}=-\frac{\eta}{(t_f-t)^2}\bar{\mc{L}}\mathrm{e}^{-\bar{\mc{L}}\m{x}}.
\end{align*}

Simulation results of formation control of four mobile robots under the control law \eqref{eq:control_mobile_robot} with $\eta=\eta_2=2,t_1=4$s and $t_f=8$s are provided in Fig. \ref{fig:robot_formation_control}. A video of the simulation can be found in \url{https://youtu.be/rVPExz7qbGk}.
 It can be seen that the robots' hand positions form a square within the prespecified time $t_f=8$s.
 \begin{figure}[t]
\centering
\includegraphics[width=0.48\textwidth,keepaspectratio]{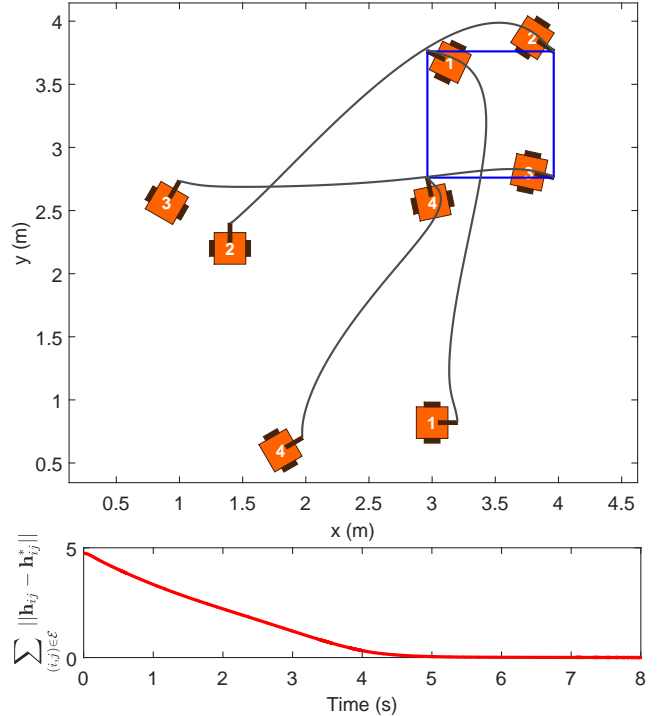}
\caption{Formation control of four mobile robots under \eqref{eq:control_mobile_robot} with $\eta=\eta_2=2,t_1=4$s and $t_f=8$s. Trajectories of the robots (upper). Total displacement error (lower).}
\label{fig:robot_formation_control}
\end{figure}
\section{Conclusion}\label{sec:conclusion}
In this note, free-will arbitrary time consensus schemes were presented for multiagent systems with both single- and double-integrator modeled agents. The average consensus protocol for systems of single-integrator modeled agents was introduced to remedy the technical issues associated with the consensus protocol in \cite{AKPal2020Tcyb}. All the proposed consensus schemes possess distributed nature which is favored in problems related to multiagent systems where only local communication and sensing between neighboring agents are employed. Further, an application of the proposed consensus scheme in FwAT formation control of mobile agents is presented and simulation results are also provided to validate the theoretical development. 



%

\appendices

\section{Proof of Lemma \ref{lm:bounded_x_parallel}}\label{app:bounded_x_parallel}
The free-will arbitrary time stability of $\bar{\m{x}}^\parallel$ of the nominal system $\dot{\m{x}}^{\parallel}=-\bs{\phi}_1(t,\m{x}^\parallel)$ follows from a similar argument as in Theorem \ref{thm:fixed_time_consensus_1}. 

Let $\bs\delta = \m{x}^\parallel-\bar{\m{x}}^\parallel$; it follows that $\m{1}_n^\top\bs\delta=0$ along the trajectory of \eqref{eq:system_in_consensus_space}. Thus, the derivative of the Lyapunov function $V=\delta^\top\delta$ is given as
\begin{align*}
\dot{V}&=-\frac{2\eta}{t_f-t}\bs\delta^\top\mc{L}(\m{1}_n-\mathrm{e}^{-\mc{L\bs\delta}}) + 2\bs\delta^\top\m{P}\m{z}(t)\\
&\leq -\frac{2\eta}{t_f-t}||\mc{L}\bs\delta||(\m{1}_n-\mathrm{e}^{-||\mc{L}\bs\delta||}) + 2||\delta||||\m{z}(t)||, \numberthis \label{eq:dotV_bounded_state}
\end{align*} where the inequality follows from \eqref{eq:vector_inequality} and $||\m{P}\m{z}(t)||\leq ||\m{z}(t)||$.
Since $\m{1}_n^\top\bs\delta =0$ we have $\lambda_2(\mc{L})\sqrt{V} \leq ||\mc{L}\bs\delta||$. As a result, it follows from \eqref{eq:dotV_bounded_state} that
\begin{align*}
\dot{V}&\leq -\frac{2\eta}{t_f-t}\lambda_2\sqrt{V}(1-\mathrm{e}^{-\lambda_2\sqrt{V}}) + 2\sqrt{V}||\m{z}(t)||.
\end{align*}
Now, let $\xi=\lambda_2\sqrt{V}$; one has
\begin{align*}
\dot{\xi}&=\lambda_2{\dot{V}}/{(2\sqrt{V})}\\
&\leq-\frac{\eta \lambda_2^2}{t_f-t}( 1-\mathrm{e}^{-\xi})+\lambda_2||\m{z}(t)||.
  \\ &\leq \lambda_2||\m{z}(t)||.
\end{align*}
From the comparison lemma \cite{Khalil2002}, we have
\begin{align*}
 \xi(t)&\leq \textstyle\int_{t_0}^t||\m{z}(\tau)||d\tau+ \xi(0)< \infty,
\end{align*}
for all $t\in [t_0,t_1]$. This shows that $V$ is bounded and so is $\m{x}^\parallel(t)$ for all $t\in[t_0,t_1]$. Thus, the perturbed system \eqref{eq:system_in_consensus_space} is input to state stable w.r.t. the vanishing input $\m{z}(t)$.
\section*{Acknowledgment}

The work of  Q. V. Tran and H.-S. Ahn is supported by the National Research Foundation (NRF) of Korea under the grant NRF2017R1A2B3007034.

\ifCLASSOPTIONcaptionsoff
  \newpage
\fi



%

\bibliographystyle{IEEEtran}
\bibliography{quoc2018,quoc2019}

%


%






\end{document}